\def\antiddot{\mathinner{\mkern1mu\raise1pt\vbox{\kern7pt\hbox{.}}\mkern2mu
        \raise4pt\hbox{.}\mkern2mu\raise7pt\hbox{.}\mkern1mu}}
\newcommand{\EE}{{\mathbb E}}
\newcommand{\FF}{{\mathbb F}}
\newcommand{\GG}{{\mathbb G}}
\newcommand{\KK}{{\mathbb K}}
\newcommand{\MM}{{\mathbb M}}
\newcommand{\TT}{{\mathbb T}}
\newcommand{\LL}{{\mathbb L}}
\newcommand{\HH}{{\rm{H}}}
\newcommand{\Ext}{{\rm{Ext}}}
\newcommand{\punkt}{\hspace{-.3ex}\raise.15ex\hbox to1ex{\Huge.}}
\def \fix#1 {{\hfill\break \bf (( #1 ))\hfill\break}}
\DeclareMathOperator{\Hom}{Hom}
\DeclareMathOperator{\depth}{depth}
\DeclareMathOperator{\Tor}{Tor}
\newtheorem{theorem}{Theorem}[section]
\newtheorem{lemma}[theorem]{Lemma}
\newtheorem{proposition}[theorem]{Proposition}
\newtheorem{corollary}[theorem]{Corollary}
\theoremstyle{definition}
\newtheorem{example}[theorem]{Example}
\def\FF{{\mathbb F}}
\def\fix#1{{\bf ***Fix:} #1 {\bf ***}}
\def\mm{{\frak m}}
\def\D{{\mathcal D}}
\def\S{{\mathcal S}}
\def\lbracket{{[\kern-1.5pt[}}
\def\rbracket{{]\kern-1.5pt]}}
\def\seq#1#2{{#1_{1},\dots,#1_{#2}}}
\def\cn{{n}}
\def\Ddots{\mathinner{\mkern1mu\raise\p@
\vbox{\kern7\p@\hbox{.}}\mkern2mu
\raise4\p@\hbox{.}\mkern2mu\raise7\p@\hbox{.}\mkern1mu}}
\newdimen\x \x=12pt
\author[David Eisenbud]{David Eisenbud}
\address{Department of Mathematics, University of California at Berkeley and the Mathematical
Sciences Research Institute, Berkeley, CA 94720, USA}
\email{de@msri.org}
\author[Frank-Olaf Schreyer]{Frank-Olaf Schreyer}
\address{Fachbereich Mathematik, Universit\"at des Saarlandes, Campus E2 4, D-66123 Saar\-br\"ucken, Germany}
\email{schreyer@math.uni-sb.de}
\title{Tate Resolutions and Maximal Cohen-Macaulay Approximations}
\begin{document}

\begin{abstract}
We study the Tate resolutions and the maximal Cohen-Macaulay approximations of Cohen-Macaulay modules over Gorenstein rings. One consequence is an extension of a well-known result about linkage of complete intersections.
\end{abstract}

\maketitle

\section*{Introduction} 
\subsection*{Tate Resolutions} Let $R$ be a Gorenstein local ring, and let $M$ be a finitely generated $R$-module. 
 A \emph{Tate resolution} $\TT$ of $M$ is by definition a free, doubly infinite complex that coincides with a free resolution of $M$ in sufficiently high homological degree. If $\TT$ is minimal, in the sense that the differential of
 $R/\mm_R \otimes \TT$ is 0, then we speak of a minimal Tate resolution. Such minimal Tate resolutions always exist, and are unique up to (typically non-unique) isomorphism. 
  
In this note we make the construction of Tate resolutions more explicit. For example, in the special case when:
\medbreak
\noindent *)\quad $S$ is a Gorenstein ring; $I = (\seq gc)\subset J = (\seq f\cn)$
 are complete intersection ideals; $R = S/I$ and $M = S/J$,
\medskip
 
 \noindent we can specify the Tate resolution of $M$ over $R$ completely (Theorem~\ref{CM approx for 2 reg seqs}), thus extending a Theorem of Tate~\cite{Tate} (see Example~\ref{tate's resolution}). 
 
 More generally, but
somewhat less explicitly, If $R$ is a Gorenstein factor ring of a regular local ring $S$ and $M$ is a Cohen-Macaulay $R$-module (of any codimension), we specify the form of a Tate resolution of $M$ over $R$ (Theorem~\ref{sigma} and, for the case where $R$ is a complete intersection, Proposition~\ref{compositions}). In the case where 
$R = S/(g)$ is a hypersurface, our construction reduces to the well-known construction of a matrix factorization of $g$ made from the even and odd parts of the $S$-free resolution of $M$; but in case $R$ is of higher codimension the construction seems to be new.

Tate resolutions originally appeared in John Tate's study of group cohomology. Generalizing the case of an elementary abelian $p$-group in characteristic $p$, Tate gave an explicit construction of the minimal
 resolution of the residue field of a complete intersection~\cite{Tate}. In the 0-dimensional case,
this resolution, combined with its dual, is the Tate resolution (see Example~\ref{tate's resolution}).

\subsection*{Maximal Cohen-Macaulay Approximations}
Auslander and Buchweitz \cite{AB} defined the \emph{maximal Cohen-Macaulay (MCM) approximation} of $M$ to be the unique minimal surjection from an MCM $R$-module $N$ such that
the kernel of $N\to M$ has finite projective dimension (see \cite{EP} for a recent summary and application of the theory). The module $N$ may always be decomposed as the direct sum of a free module and an MCM $R$-module $M'$ with no free summand. The module $M'$, with its induced map to $M$, is called the 
 \emph{essential MCM approximation} of $M$. It may be constructed by taking the minimal $k$-th syzygy of the dual into $R$ of the minimal $k$-th syzygy of $M$, where $k$ is any number $k \geq \max(2, \depth R-\depth M).$ 
 
The essential MCM approximation $M'$ of $M$ is immediately seen to be the cokernel of the first differential in the minimal Tate resolution of $M$.
The original motivation for our work was to give, in the situation of *) above, an explicit construction of the essential MCM approximation of $S/J$ as a module over $S/I$. This is done in Corollary~\ref{explicit MCM}.

 \begin{example}\label{tate's resolution} Here is a case treated by Tate, in a presentation adapted to this paper:
Suppose that $R = S/(\seq gc)$ where $S$ is a regular local ring with maximal ideal $\mm = (x_{1}\dots,x_{c})$ and that $\seq gc$ is a maximal regular sequence in $\mm^{2}$. Suppose further that $M$ is the residue field of $R$. Tate's paper \cite{Tate} provides an explicit minimal free resolution that may be written as the total complex of a double complex
beginning
\begin{small}
$$
\begin{diagram}[small]
\FF:\quad  R&\lTo^{
\begin{pmatrix}
 x_{1}&\dots&x_{c}
\end{pmatrix}
} &R^{c}&\lTo&\bigwedge^{2}R^{c}&\cdots\\
&&\uTo&&\uTo&\cdots\\
&&R^{c}\otimes R&\lTo&R^{c}\otimes R^{c}&\cdots\\
&&&&\uTo&\cdots
\end{diagram}
$$
\end{small}
Here we have written $R^{c} \otimes R$ instead of $R^{c}$  to emphasize that the second row is the
tensor product of $R^{c}$ with the first row. This is explained in detail in a more general case in Section~\ref{two regs}. 

In this case, $R$ is 0-dimensional and Gorenstein. Thus $R$ is an injective $R$-module, so the dual $\FF^{*} =\Hom_{R}(\FF,R)$ is exact except at $F_{0}^{*}$. Furthermore,
$$
H^{0}(\FF^{*}) = \ker \bigl(F_{0}^{*}\cong S \rTo^{
\begin{pmatrix}
 x_{1}\\ \vdots\\x_{c}
\end{pmatrix}
} S^{n*}\cong F_{1}^{*}\bigr) \cong M
$$
Thus the Tate resolution of $M$ is obtained by ``splicing'' together $\FF$ and $\FF^{*}$ via a map $\alpha: S\to S \cong S^{*}$ which may be taken to be multiplication by any generator of the socle. One well-known expression for such a generator is as $\det A$, where $A$ is a matrix expressing the $g_{i}$ as linear combinations of the $f_{i}$. Thus the Tate resolution of the residue field of $R$ has the form:
\begin{small}
 $$
\begin{diagram}[small]
\cdots&\uTo \\
\cdots&R^{c*}\otimes R^{c*}&\lTo&R^{c*}\\
\cdots&\uTo&&\uTo\\
\cdots&\bigwedge^{2}R^{c*} 
&\lTo& R^{c*}&\lTo^{
\begin{pmatrix}
 x_{1}\\ \vdots\\x_{c}
\end{pmatrix}
}
&R \\
&&&&&\uTo_{\alpha = \det A}  \\
&&&&&R&\lTo^{
\begin{pmatrix}
 x_{1}&\dots&x_{c}
\end{pmatrix}
}
&R^{c}
&\lTo& \bigwedge^{2}R^{c}&\cdots\\
&&&&&&&\uTo&&\uTo&\cdots\\
&&&&&&&R^{c}\otimes R&\lTo &R^{c}\otimes R^{c}&\cdots\\
&&&&&&&&&\uTo&\cdots
\end{diagram}
$$
\end{small}
\end{example}

The theory works with no essential change in the case when $R$ and $S$ are positively graded rings and also for any pair of complete intersections $I\subset J$ \emph{of the same codimension}.
In Theorem~\ref{CM approx for 2 reg seqs} we give an equally explicit description in the case when the codimensions of $I$ and $J$ are different.

\section{Duality in the Tate resolution of a Cohen-Macaulay module}

Tate resolutions associated with Cohen-Macaulay modules always have a sort of duality:

\begin{proposition}\label{general MCM approx} Let $R$ be a Gorenstein ring, and let $M$ be a Cohen-Macaulay $R$-module whose annihilator has codimension $m$ over $R$.
Let $(\FF,\delta)$ and $(\GG, \partial)$ be $R$-free resolutions of $M$ and of $M^{\vee}:=\Ext_{R}^{m}(M,R)$, with terms 
$F_{i}$ and $G_{i}$, respectively.
 There is a quasi-isomorphism $\phi: \FF \to \GG^{*}[-m]$: 
 $$
\begin{diagram}
 \cdots&\lTo^{\partial^{*}}& G^{*}_{m}&\lTo^{\partial^{*}} &G^{*}_{m-1}&\lTo^{\partial^{*}}&\cdots& \lTo^{\partial^{*}}&G^{*}_{0}&\lTo &0\\
 &&\uTo^{\phi_{0}}&&\uTo^{\phi_{1}}&&&&\uTo^{\phi_{m}}\\
 &&F_{0}&\lTo^{\delta}&F_{1}&\lTo^{\delta}&\cdots&\lTo^{\delta}&F_{m}&\lTo^{\delta}&\cdots
\end{diagram}.
 $$
The mapping cone $\MM(\phi)$, the total complex of the double complex above, is a Tate resolution for $M$, and $\MM(\phi^{*})$ is a Tate resolution of $M^{\vee}$.

In particular, the essential MCM approximation of $M$ over $R$ has a presentation
 $$
 F_{0}\oplus G_{m-1}^{*} \lTo^{
\begin{pmatrix}
 \delta&0\\
 \phi_{1}&\partial^{*}
\end{pmatrix}
}
F_{1}\oplus G_{m-2}^{*}.
 $$
\end{proposition}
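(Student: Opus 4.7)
The plan is to prove the three assertions in sequence, using the hypotheses that $R$ is Gorenstein and $M$ is Cohen--Macaulay of codimension $m$.

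For the existence of $\phi$ as a quasi-isomorphism, I will compute $\rH^\ast(\GG^\ast)$. Since $\GG$ is a free resolution of $M^\vee$, this cohomology is $\Ext^\ast_R(M^\vee,R)$. A standard Gorenstein-duality argument gives that $M^\vee$ is itself Cohen--Macaulay of codimension $m$ with $(M^\vee)^\vee\cong M$, so $\Ext^i_R(M^\vee,R)=0$ for $i\ne m$ and $\Ext^m_R(M^\vee,R)=M$. Therefore $\GG^\ast[-m]$ has cohomology $M$ concentrated in homological degree $0$, matching the cohomology of $\FF$. Both complexes consist of free modules, so the standard projective lifting procedure (starting from $\mathrm{id}_M$ and extending degree by degree) produces a chain map $\phi:\FF\to\GG^\ast[-m]$ covering $\mathrm{id}_M$, which is automatically a quasi-isomorphism.

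For the mapping-cone assertions, $\MM(\phi)$ is acyclic because the cone of a quasi-isomorphism is, its terms are free, and---because $\GG^\ast$ has only finitely many nonzero terms in the relevant range---in sufficiently high homological degree $\MM(\phi)$ coincides term by term and differential by differential with $\FF$. This verifies the definition of a Tate resolution of $M$. Dualizing, $\phi^\ast:\GG\to\FF^\ast[-m]$ is a quasi-isomorphism by the symmetric cohomological argument (now using $(M^\vee)^\vee\cong M$ in the other direction), and $\MM(\phi^\ast)$ is accordingly a Tate resolution of $M^\vee$.

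Finally, a direct inspection of the total-complex structure identifies the $2\times 2$ matrix in the statement with the differential $F_1\oplus G^\ast_{m-2}\to F_0\oplus G^\ast_{m-1}$ in $\MM(\phi)$. Since, as recalled in the introduction, the essential MCM approximation $M'$ of $M$ is the cokernel of the first differential in the Tate resolution, we read off the stated presentation. The main obstacle is verifying that this cokernel really is an MCM approximation---an MCM module surjecting onto $M$ with kernel of finite projective dimension. This should follow by composing $F_0\oplus G^\ast_{m-1}\to M$ (augmentation on the first factor, zero on the second) with the surjection from the cokernel, and identifying the resulting kernel with a subquotient of $\GG^\ast$ of finite projective dimension, exploiting the acyclicity of $\GG^\ast$ below its top cohomology.
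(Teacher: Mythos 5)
Your construction of $\phi$ and your argument that $\MM(\phi)$ is a Tate resolution of $M$ follow the same route as the paper: compute $\rH^{*}(\GG^{*})=\Ext^{*}_{R}(M^{\vee},R)$, use that $M^{\vee}$ is again Cohen--Macaulay of codimension $m$ with $(M^{\vee})^{\vee}\cong M$, lift $\mathrm{id}_{M}$ by the comparison theorem, and note the cone is exact, free, and agrees with $\FF$ in high degrees. The gap is in your treatment of $\phi^{*}$. The ``symmetric cohomological argument'' shows that $\GG$ and $\FF^{*}[-m]$ have the same homology ($M^{\vee}$, concentrated in one degree) and would let you construct \emph{some} quasi-isomorphism between them by lifting $\mathrm{id}_{M^{\vee}}$; but the proposition asserts that the \emph{dual of the already-chosen} $\phi$ is a quasi-isomorphism, and matching homology of source and target does not make a given map induce an isomorphism on homology. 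Nor can you dualize formally: $\FF$ is in general an infinite resolution and $\GG^{*}$ is unbounded in the other direction, so $\phi$ need not be a homotopy equivalence, and $\Hom_{R}(-,R)$ does not in general carry quasi-isomorphisms of unbounded complexes of finitely generated free modules to quasi-isomorphisms. This is exactly where the Gorenstein and Cohen--Macaulay hypotheses must re-enter.

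The paper closes this gap as follows: $\MM(\phi)$ is an exact complex of finitely generated free modules, so every image in it is an arbitrarily high syzygy and hence a maximal Cohen--Macaulay module; therefore every truncation of $\MM(\phi)$ is a free resolution of an MCM module, and since $\Ext^{>0}_{R}(N,R)=0$ for MCM modules $N$ over the Gorenstein ring $R$, the dual complex $\MM(\phi)^{*}=\MM(\phi^{*})$ (up to shift) has no homology. Exactness of this cone is precisely the statement that $\phi^{*}$ is a quasi-isomorphism, and since $\MM(\phi^{*})$ agrees with $\GG$ in high homological degree it is a Tate resolution of $M^{\vee}$. You need this argument (or an equivalent total-acyclicity statement for acyclic complexes of free modules over Gorenstein rings) to justify your third paragraph. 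Your final step, reading off the presentation from the degree-one differential of $\MM(\phi)$ together with the fact that the essential MCM approximation is the cokernel of the first differential of the (minimal) Tate resolution, is the same as the paper's; the extra verification you sketch at the end is not needed once that quoted fact is invoked, though you should note, as the paper implicitly does, that a non-minimal Tate resolution identifies this cokernel only up to free summands.
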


\noindent{\bf Remark:} If we drop the Gorenstein hypothesis but still assume that $R$ is Cohen-Macaulay, and replace $(-)^{*}$ with $(-)^{\vee} = \Hom_{R}(-, \omega_{R})$, then similar statements still hold.

\begin{proof}Because $M$ and $M^{\vee}$ are Cohen-Macaulay modules of codimension $m$ we have $M \cong \Ext_{R}^{m}(M^{\vee},R) = \HH^{m}(\GG^{*})$, while
$\Ext^{i}_{R}(M^{\vee},R) = 0$ for $i\neq m$.

 The isomorphism $M = \HH_{0}(\FF) \cong \HH^{m}(\GG^{*})$ lifts to a map $F_{0}\to G^{* }_{m}$
 which induces a map $F_{1}\to G^{*}_{m-1}$, and thus to a  quasi-isomorphism
 $\phi: \FF\to \GG^{*}$ of cohomological degree $m$.
 It follows that the mapping cone $\MM(\phi)$ of $\phi$ has no homology. Since it coincides with $\FF$ in large
 homological degree, it is a Tate resolution of $M$.
 
Since the image of each map in $\MM(\phi)$ is a maximal Cohen-Macaulay module, every truncation of $\MM$ is a resolution of such a module, and thus the dual $\MM^{*}(\phi) = \MM(\phi)^{*}$ has no homology. It follows that $\phi^{*}$ is also a quasi-isomorphism. 
 \end{proof}
 
From Proposition~\ref{general MCM approx} we see that, beyond the free resolutions of $M$ and $M^{\vee}$, the new information in the Tate resolution lies in the description of the map of complexes $\phi$. The rest of this paper is devoted to further description of this maps.

In the situation of Proposition~\ref{general MCM approx}, suppose in addition that $R=S/I$. To construct a (generally non-minimal) $R$-free resolution $\FF$ of $M$ one might take an $S$-free resolution $\KK$ of $M$, tensor with $R$, and then extend it to an $R$-free resolution. The next result applies, in particular, to the case when $S$ is regular local and $R = S/I$ is Gorenstein,  and also to the case where $S$ is arbitrary and $I$ is generated by a regular sequence. It is a step toward building a map of complexes $\phi$ as in Proposition~\ref{general MCM approx}.

\def\EE{{\mathbb E}}
\begin{theorem}\label{sigma}
 Suppose that $S$ and $R = S/I$ are Noetherian rings and that $R$ has an
 $S$-free resolution $\EE$ 
 $$
 \EE: S \lTo^{\partial_{1}}E_{1}\lTo \cdots\lTo E_{c-1} \lTo^{\partial_{c}}E_{c}\lTo 0.
 $$ 
with 
$
 \partial_{c} \cong \partial_{1}^{*}.
$
Let $M$ be an $R$-module, and let $(\KK, \delta)$
be an $S$-free resolution of $M$. If $\sigma^{M}$ is a map of complexes 
$\EE\otimes \KK \to \KK$  with components
$$
\sigma^{M}_{i,j}: E_{i}\otimes K_{j} \to K_{i+j}
$$
that induces the multiplication map $R\otimes M\to M$ then
 $$
\begin{diagram}
 \cdots&\lTo^{R\otimes \delta}& R\otimes K_{c-1}&\lTo^{R\otimes \delta} &R\otimes K_{c}&\lTo^{R\otimes \delta}&R\otimes K_{c+1} &\lTo^{R\otimes \delta}& \cdots\\
&&&&\uTo^{R\otimes \sigma_{c,0}}&&\uTo^{R\otimes \sigma_{c,1}}&&\cdots\\
&& 0&\lTo&R\otimes K_{0}&\lTo^{R\otimes \delta}&R\otimes K_{1} &\lTo^{R\otimes \delta}&\cdots\\
\end{diagram}.
 $$
is a map of complexes inducing
an isomorphism $M = H_{0}(R\otimes \KK) \to H_{c}(R\otimes\KK) = \Tor_{c}^{S}(R,M)$.
\end{theorem}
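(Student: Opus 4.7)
The plan is to verify two claims: (a) the displayed diagram is a map of complexes, and (b) the induced map on homology is an isomorphism.

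For (a), the crucial observation is $\partial_c(E_c) \subseteq I E_{c-1}$, which follows from $\partial_c \cong \partial_1^*$ and the fact that $\partial_1(E_1) = I$ (since $\EE$ resolves $R = S/I$): in a chosen basis, $\partial_1^*(1) = \sum_i \partial_1(e_i)\, e_i^*$ lies in $I \cdot E_1^*$. Applying this to the chain-map identity
$$\delta\, \sigma_{c, j} = \sigma_{c-1, j}(\partial_c \otimes 1) + (-1)^c \sigma_{c, j-1}(1 \otimes \delta)$$
and tensoring with $R$ kills the first summand, yielding $(R \otimes \delta)(R \otimes \sigma_{c, j}) = (-1)^c (R \otimes \sigma_{c, j-1})(R \otimes \delta)$. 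Absorbing the sign (by replacing $R \otimes \sigma_{c, j}$ with $(-1)^{cj}\, R \otimes \sigma_{c, j}$, which does not alter the induced map on homology) gives the required chain map.

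For (b), I first identify $\Tor_c^S(R, M) \cong M$ canonically. Using $\EE$ to compute $\Tor$, the top differential $\partial_c \otimes 1_M$ vanishes (since $\partial_c(E_c) \subseteq I E_{c-1}$ and $IM = 0$) and $E_{c+1} = 0$, so $\Tor_c^S(R, M) = E_c \otimes_S M \cong M$ (using $E_c \cong S$ forced by $\partial_c \cong \partial_1^*$). Composing with this iso turns the map under study into an $R$-linear endomorphism $\bar\psi_M\colon M \to M$.

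Next I argue that $\bar\psi_M$ is natural in $M$. Given $f\colon M \to M'$ with chain lift $\tilde f\colon \KK \to \KK'$, the two chain maps $\tilde f \circ \sigma^M$ and $\sigma^{M'} \circ (1 \otimes \tilde f)$ from $\EE \otimes \KK$ to $\KK'$ induce the same map on $H_0$, hence (by the comparison theorem for the $S$-free resolution $\KK'$) are chain-homotopic, so they agree on every $H_n$. This gives naturality of $R \otimes \sigma_{c, 0}$ on homology, and together with functoriality of $\Tor_c^S(R, -) \cong (-)$ yields naturality of $\bar\psi$. Natural endomorphisms of the identity functor on the category of $R$-modules are multiplication by elements of $R$, so $\bar\psi = \cdot\, r$ for some $r \in R$. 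To compute $r$, specialize to $M = R$ with $\KK = \EE$ and any $\sigma$ satisfying $\sigma_{i, 0} = \mathrm{id}_{E_i}$ for all $i$ (the chain-map equations at bidegree $(i, 0)$ are trivially satisfied by this choice); then $R \otimes \sigma_{c, 0}$ is the identity of $R$, giving $r = 1$ and hence $\bar\psi_M = \mathrm{id}_M$, so the induced map is an isomorphism. The hardest part will be the naturality argument, where both $\sigma^M$ and the lift $\tilde f$ are only canonical up to chain homotopy, requiring careful bookkeeping to check that $\bar\psi$ descends to a well-defined natural transformation on homology.
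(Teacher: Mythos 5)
Your step (a) and the reduction to the case $M=R$ are sound and essentially the paper's own argument, and your endgame (showing the induced endomorphism $\bar\psi$ is a natural endomorphism of the identity functor on $R$-modules, hence multiplication by some $r\in R$, then computing $r=1$ by choosing $\sigma^{R}$ with $\sigma_{i,0}=\mathrm{id}$) is a legitimate variant of the paper's finish, which instead uses the presentation $R\otimes K_{1}\to R\otimes K_{0}\to M\to 0$, the fact that $\Tor^{S}_{c}(R,-)$ is right exact on $R$-modules, and a diagram chase. However, the naturality step, which you yourself flag as the crux, has a genuine gap as written. You argue that $\tilde f\circ\sigma^{M}$ and $\sigma^{M'}\circ(1\otimes\tilde f)$, being chain maps $\EE\otimes\KK\to\KK'$ lifting the same map on $H_{0}$, are homotopic and therefore ``agree on every $H_{n}$.'' That statement concerns the maps $H_{n}(\EE\otimes\KK)\to H_{n}(\KK')$ and is vacuous for $n>0$, since $\KK'$ is a resolution and $H_{n}(\KK')=0$; it says nothing about the maps you actually need to compare, namely the chain maps $R\otimes\KK\to R\otimes\KK'[-c]$ obtained by restricting to the row $E_{c}\otimes\KK$ and applying $R\otimes-$, acting on $H_{*}(R\otimes\KK)=\Tor^{S}_{*}(R,M)$, which is far from zero. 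So ``this gives naturality of $R\otimes\sigma_{c,0}$ on homology'' is a non sequitur as it stands.

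The repair uses exactly the observation you already exploited in (a): writing $\tau$ for the homotopy with $\tilde f\sigma^{M}-\sigma^{M'}(1\otimes\tilde f)=\delta'\tau+\tau d_{\EE\otimes\KK}$, on $E_{c}\otimes K_{j}$ the differential $d_{\EE\otimes\KK}$ has components $\partial_{c}\otimes 1$ and $\pm 1\otimes\delta$, and since $\partial_{c}$ has entries in $I$ the term $\tau\circ(\partial_{c}\otimes 1)$ vanishes after tensoring with $R$. Hence $\tau$ restricts to a genuine homotopy between the two induced chain maps $R\otimes E_{c}\otimes\KK\to R\otimes\KK'$, and only then do they agree on homology, giving the naturality square between $\Tor_{0}$ and $\Tor_{c}$; this is precisely the paper's argument. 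The same restricted-homotopy remark is what makes $\bar\psi_{M}$ well defined, i.e.\ independent of the choices of $\sigma^{M}$ and of the lift $\tilde f$, which your Yoneda-style argument tacitly requires. (You should also note, briefly, that the prescription $\sigma_{i,0}=\mathrm{id}$ on the subcomplex $\EE\otimes E_{0}$ really does extend to a chain map $\EE\otimes\EE\to\EE$ lifting multiplication, by the usual step-by-step comparison argument.) With these points supplied, your proof goes through.
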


\begin{lemma}\label{Tor} If $S$ and $R$ are as in Theorem~\ref{sigma}, then
the functor $\Tor^{S}_{c}(R, -)$, restricted to the category of $R$-modules, is equivalent to the identity functor.
\end{lemma}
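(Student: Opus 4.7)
The plan is to compute $\Tor^S_c(R,N)$ directly from the resolution $\EE$ for any $R$-module $N$, exploiting both the length of $\EE$ and the self-duality hypothesis $\partial_c \cong \partial_1^{*}$.

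First, since $E_i = 0$ for $i > c$, tensoring $\EE$ over $S$ with $N$ immediately identifies the top Tor as a kernel:
$$
\Tor^S_c(R,N) \;=\; \ker\bigl(\partial_c \otimes 1_N : E_c \otimes_S N \to E_{c-1}\otimes_S N\bigr).
$$
Next, I would use $\partial_c \cong \partial_1^{*}$ to fix isomorphisms $E_c \cong S$ and $E_{c-1} \cong E_1^{*}$ under which the dual map $\partial_1^{*}$ sends $1 \in S$ to $\partial_1 \in E_1^{*}$. Tensoring with $N$, and using freeness of $E_1$ to identify $E_1^{*}\otimes_S N$ with $\Hom_S(E_1,N)$, the map $\partial_c \otimes 1_N$ becomes
$$
N \longrightarrow \Hom_S(E_1,N), \qquad n \longmapsto \bigl(e \mapsto \partial_1(e)\cdot n\bigr).
$$

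The crux is then immediate: $\partial_1(e) \in \image(\partial_1) = I$, and $I$ annihilates $N$, so $\partial_c \otimes 1_N$ is the zero map. Hence $\Tor^S_c(R,N) = E_c \otimes_S N \cong N$, naturally in $N$, yielding the claimed equivalence $\Tor^S_c(R,-) \cong \mathrm{Id}$ on the category of $R$-modules.

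The only point that is not purely formal is the naturality in $N$, but this is automatic: every identification in the argument (taking kernels, forming tensor products, and fixing the isomorphisms $E_c \cong S$ and $E_{c-1} \cong E_1^{*}$ once and for all) is functorial in $N$. I therefore do not expect a serious obstacle; the whole argument is driven by the observation that the self-duality hypothesis is precisely what forces the matrix of $\partial_c$ to have entries in $I$, and hence to vanish upon tensoring with any $R$-module.
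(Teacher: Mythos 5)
Your proof is correct and follows essentially the same route as the paper: compute $\Tor^S_c(R,N)$ from the finite resolution $\EE$, observe that $\partial_c\cong\partial_1^{*}$ has entries in $I=\image(\partial_1)$ and hence becomes zero after tensoring with any $R$-module, and conclude $\Tor^S_c(R,N)\cong E_c\otimes_S N\cong N$ naturally via a fixed isomorphism $E_c\cong S$. Your write-up merely spells out in more detail the identification of $\partial_c\otimes 1_N$ with $n\mapsto(e\mapsto\partial_1(e)\cdot n)$, which the paper leaves implicit.
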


\begin{proof}[Proof of Lemma~\ref{Tor}]
We compute $\Tor^{S}_{c}(R,M)$ using the given resolution of $R$. Because $\partial_{1}^{*}\otimes M = 0$ we see that
 $$
\Tor^{S}_{c}(R,M) = \ker(E_{c}\otimes M \rTo^{\partial_{1}^{*}\otimes M} E_{c-1}\otimes M) \cong M.
$$

Any choice of an isomorphism $E_{c}\cong S$ gives an equivalence between this functor and the identity functor.\end{proof}

\begin{proof}[Proof of Theorem~\ref{sigma}] We first prove that 
the maps $\sigma^{M}_{c,*}$ form a map of complexes $\sigma^{M}_{c}: R\otimes \KK \to R\otimes \KK[-c]$.
Write $\delta$ for the differential of $\KK$.
From the definition of the $\sigma^{M}_{i,j}$ we see that there are commutative diagrams
\begin{small}
$$
\begin{diagram}
R\otimes K_{i-1+c} &\lTo& R\otimes K_{i+c}\\
\uTo^{
R\otimes\begin{pmatrix}
 \sigma^{M}_{c,i-1} & \sigma^{M}_{c-1,i} 
\end{pmatrix}
}&& \uTo_{R\otimes\sigma^{M}_{ c,i}}\\
R\otimes \bigl((E_{c} \otimes K_{i-1})\oplus (E_{c-1}\otimes K_{i })\bigr) &
\lTo^{
R\otimes \begin{pmatrix}
\delta\otimes 1\\ \pm 1\otimes  \partial)
\end{pmatrix}}
&
R\otimes E_{c}\otimes \KK_{i}\\
\end{diagram}
$$
However, $R\otimes (1\otimes \partial): E_{c}\to E_{c-1}$ is 0, so the diagrams
$$
\begin{diagram}
R\otimes K_{i-1+c} &\lTo& R\otimes K_{i+c}\\
\uTo^{R\otimes \sigma^{M}_{c,i-1}}
&& \uTo_{R\otimes\sigma^{M}_{ c,i}}\\
R\otimes E_{c} \otimes K_{i-1} &
\lTo^{\delta\otimes 1}&
R\otimes E_{c}\otimes K_{i}\\
\end{diagram}
$$
\end{small}
also commute, as required.

We next show that for any $R$-module $M$ the map $\sigma_{c}^{M}$ induces a functorial isomorphism $M = \Tor^{S}_{0}(R,M)\to \Tor^{S}_{c}(R,M)$. We first prove functoriality.
 Let $\varphi:M\to N$ be a homomorphism of $R$-modules, let $\LL$ be the $S$-free resolution of $N$, and let $\phi: \KK\to\LL$ be a map extending $\varphi$. Choose maps $\sigma^{M}:\EE\otimes \KK \to \KK$ and $\sigma^{N}: \EE\otimes \LL \to \LL$ extending the multiplication maps as above.

There is a homotopy $\tau$ between the two compositions 
$$
\EE\otimes \KK \rTo^{\sigma^{N}\circ (1\otimes\phi)} \LL
$$
and 
$$
\EE\otimes \KK \rTo^{\phi\circ\sigma^{M}} \LL.
$$
because they cover the same map $R\otimes M \to N$ and $\LL$ is acyclic. Because $R\otimes \partial_{c} = 0$, this homotopy restricts to 
 a homotopy between the induced maps 
 $$
 R\otimes E_{c}\otimes \KK\rTo^{ R\otimes \bigl(\sigma^{N}\circ (1\otimes\phi)\bigr)}R\otimes \LL
 $$
 and 
 $$
 R\otimes E_{c}\otimes \KK\rTo^{ R\otimes(\phi\circ\sigma^{M})}R\otimes \LL.
 $$
In particular, the diagrams
$$
\begin{diagram}
 \Tor^{S}_{i+c}(R,M)& \rTo^{\Tor^{S}_{i+c}(R,\varphi)}& \Tor^{S}_{i+c}(R,N)\\
 \uTo^{\sigma^{M}_{*}} && \uTo_{\sigma^{N}_{*}}\\
 \Tor^{S}_{i}(R,M)& \rTo^{\Tor^{S}_{i}(R,\varphi)}& \Tor^{S}_{i}(R,N)\\
\end{diagram}
$$
commute. This proves the functoriality.

We next observe that  $\sigma^{R}_{c,0}$ is an isomorphism. This follows because we may choose
$\sigma^{R}: \EE \otimes \EE \to \EE$ to restrict to the identity map on the subcomplex $\EE\otimes E_{0}=\EE$. It follows from this that $\sigma_{c,0}^{R^{s}}$ is an isomorphism for any $s$.

From the right exact sequence 
$$
R\otimes K_{1} \to R\otimes K_{0} \to M\to 0
$$ 
we now get a commutative diagram
$$
\begin{diagram}
Tor_{c}^{S}(R,R\otimes K_{1})&\rTo& Tor_{c}^{S}(R,R\otimes K_{0}) &\rTo&  Tor_{c}^{S}(R,M)&\rTo& 0\\
\uTo^{\sigma_{*}^{K_{1}}}&&\uTo^{\sigma_{*}^{K_{0}}}&&\uTo^{\sigma_{*}^{M}}\\
Tor_{0}^{S}(R,R\otimes K_{1})&\rTo& Tor_{0}^{S}(R,R\otimes K_{0}) &\rTo&  Tor_{0}^{S}(R,M)&\rTo& 0\\
\end{diagram}.
$$
The bottom row is the $R$-free presentation of $M$, and the top row is also right exact because $\Tor^{S}_c(R,-)$ is an equivalence on the category of $R$-modules. The two left-hand  vertical maps are isomorphisms because $R\otimes K_{1}$ and $R\otimes K_{0}$ are free. It follows by a diagram chase that $\sigma_{*}^{M}$ is an isomorphism as well, 
completing the proof.
\end{proof}

If $R$ is a complete intersection in $S$ the maps $\sigma_{i,j}$ of Theorem~\ref{sigma} have a simpler description:

\begin{proposition}\label{compositions}
Suppose that $S$ is a Noetherian ring, that $\seq gc$ is a regular sequence in $S$, and that $R = S/(\seq gc)$, so that the $S$-free resolution of $R$ is the Koszul complex
$$
S\lTo^{\partial} S^{c}\lTo^{\partial}\bigwedge^{2}S^{c}\lTo^{\partial} \cdots\lTo^{\partial} \bigwedge^{c} S^{c}  \lTo 0.
$$ 
Suppose that $\KK$ is an $S$-free resolution of an $R$-module $M$, and, for $1\leq j\leq c$,  let $\tau_{j}: \KK \to \KK[1]$ be
a homotopy for multiplication by $g_{j}$ on $\KK$.  Let $e_{1},\dots, e_{c}$ be a basis for $S^{c}$ such that
$\partial(e_{i}) = g_{i}$. The map
$$
\sigma_{i,j}: \bigwedge^{i} S^{c} \otimes K_{j} \to K_{i+j}
$$
 that takes an  element
$e_{i_{1}}\wedge\cdots\wedge e_{i_{s}} \otimes a$  with $i_{1}<\cdots <i_{s}$ to
$\tau_{i_{1}}\circ \cdots \circ \tau_{i_{s}}(a)$ satisfies the properties of the maps $\sigma$ of Theorem~\ref{sigma}.

In particular, the map $\sigma_{c,i}: K_{i}\to K_{c+i}$
of Theorem~\ref{sigma}  may be chosen to be $\tau_{1}\circ\cdots\circ\tau_{c}$.
\end{proposition}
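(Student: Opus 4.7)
The plan is to verify directly the two hypotheses of Theorem~\ref{sigma}: that $\sigma$ is a chain map $\EE \otimes \KK \to \KK$, and that it covers the multiplication $R \otimes M \to M$ on $H_0$. The ``in particular'' clause is then just the special case $I = \{1, \ldots, c\}$ of the general formula.

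The second hypothesis is immediate: for $i = 0$ the empty composition gives $\sigma_{0,j} = \mathrm{id}_{K_j}$, so $\sigma_{0,0}(s \otimes a) = sa$, which obviously lifts the multiplication $R \otimes M \to M$.

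The core task is the chain-map identity. With the Koszul sign convention $d = \partial \otimes 1 + (-1)^{i}\,(1 \otimes \delta)$ on $\bigwedge^{i}S^{c} \otimes K_{j}$, it reads
\[
\delta\, \sigma_{s,j}(e_I \otimes a) \;=\; \sigma_{s-1,j}\bigl(\partial(e_I) \otimes a\bigr) \;+\; (-1)^{s} \sigma_{s,j-1}(e_I \otimes \delta a)
\]
for every ordered $I = \{i_1 < \cdots < i_s\}$ and every $a \in K_j$. I would prove this by expanding the left-hand side and iteratively applying the homotopy identity $\delta \tau_{i_k} = g_{i_k} - \tau_{i_k}\delta$, pushing $\delta$ past the $\tau$'s from left to right. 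Push number $k$ splits off a scalar term $(-1)^{k-1} g_{i_k}\, \tau_{i_1} \circ \cdots \circ \widehat{\tau_{i_k}} \circ \cdots \circ \tau_{i_s}(a)$; after all $s$ pushes the single remainder $(-1)^{s}\,\tau_{i_1} \circ \cdots \circ \tau_{i_s}\delta(a)$ survives. The scalar sum is precisely $\sigma_{s-1,j}(\partial(e_I) \otimes a)$ via the Koszul formula $\partial(e_{i_1}\wedge\cdots\wedge e_{i_s}) = \sum_{k}(-1)^{k-1}g_{i_k}\,e_{i_1}\wedge\cdots\wedge\widehat{e_{i_k}}\wedge\cdots\wedge e_{i_s}$, using that removing one index from an ordered set leaves it ordered so no reshuffling sign appears; the remainder is $(-1)^{s}\sigma_{s,j-1}(e_I \otimes \delta a)$ tautologically.

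The only delicate point is the sign bookkeeping. Since the homotopies $\tau_j$ generally do not commute with one another, the formula for $\sigma_{s,j}$ depends on the chosen ascending order of the indices; but because we always compose in that fixed order throughout the computation, no reshuffling occurs and no hidden sign is incurred. Beyond this the argument is entirely formal, and I expect no deeper obstacle.
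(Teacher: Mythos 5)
Your proposal is correct and follows essentially the same route as the paper: the authors likewise verify the chain-map identity by pushing $\delta$ through the composition via $\delta\tau_{i_k}=g_{i_k}-\tau_{i_k}\delta$, matching the alternating scalar terms with the Koszul differential $\partial(e_{i_1}\wedge\cdots\wedge e_{i_s})$ and the surviving term with $(-1)^{s}\sigma_{s,j-1}(e_I\otimes\delta a)$. Your explicit check that $\sigma_{0,j}=\mathrm{id}$ covers the multiplication map is a harmless addition the paper leaves implicit.
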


 Note that if $(\FF,\delta)$ is any complex and $\tau$ is a homotopy for multiplication by an element $g$, then
 $\delta$ anti-commutes with $\tau$ modulo $g$. Thus the order of the $\tau_{i}$ in the formula does not matter
 modulo $I$.

\begin{proof} Since the elements 
$e_{i_{1}}\wedge\cdots\wedge e_{i_{s}}$  with $i_{1}<\cdots <i_{s}$
form a basis for $\bigwedge^{s} S^{c}$, the maps $\sigma_{i,j}$ are well-defined.

Write $\delta$ for the differential of $\KK$. The differential of $\EE\otimes \KK$ acts on 
$E_{s}\otimes K_{j}$ as $\partial \otimes 1 + 1\otimes (-1)^{s} \delta$. 
From the defining property of the homotopies $\tau_{i}$ we have 
\begin{align*}
&\delta \sigma_{s,j} (e_{i_{1}}\wedge \cdots \wedge e_{i_{s}}\otimes a)\\
& = \delta\circ\tau_{i_{1}}\circ\cdots\circ\tau_{i_{s}}(a) 
= (g_{i_{1}}-\tau_{i_{1}}) \circ\delta\circ \tau_{i_{2}}\circ\cdots\circ\tau_{i_{s}}(a) = \cdots\\
&= \bigl(\sum_{j = 0}^{i-1} (-1)^{j} g_{i_{j}}\tau_{i_{1}}\circ\cdots\circ\widehat{\tau_{i_{i_{j}}}}\circ\dots \circ \tau_{i_{s}}(a)\bigr) 
+ (-1)^{s} \tau_{i_{1}}\circ\cdots\circ\tau_{i_{s}}\circ \delta(a)\\
& = \sigma_{s-1,j}( \partial (e_{i_{1}}\wedge \cdots \wedge e_{i_{s}})\otimes a)+\sigma_{i,j-1} (e_{i_{1}}\wedge \cdots \wedge e_{i_{s}}\otimes (-1)^{s}\delta (a))
 \end{align*}
 as required.
 \end{proof}
 
To apply Proposition~\ref{general MCM approx} using Theorem~\ref{sigma}, we will use the following result
in the case $N = M^{\vee}, N' = R$.

\begin{lemma}\label{inducing iso}
Suppose that $S$ is a Noetherian ring,  let $R = S/I$, and let $N, N'$ be a finitely generated $R$-modules. Let $\LL$ be an $S$-free resolution of $N$ and let $\GG$ be an $R$-free resolution of $N$, and let $\phi: \LL \to \GG$ be a map of complexes extending the identity map of $N$. If the depth of $J := ann N$ on $N'$ is $m$, then the map $H^{m}(\Hom_{R}(\GG,N')) \to H^{m}(\Hom_{S}(\LL,N'))$ is an isomorphism.
\end{lemma}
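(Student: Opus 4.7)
The plan is to identify the given map with the standard change-of-rings morphism $\Ext^m_R(N,N')\to \Ext^m_S(N,N')$, and then show this is an isomorphism by induction on $m$ using a regular element in $J$ on $N'$.

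Viewing each $G_i$ as an $S$-module via $S\to R$, the chain map $\phi_i$ is $S$-linear, and composition $f\mapsto f\circ\phi_i$ carries an $R$-linear map $G_i\to N'$ to an $S$-linear map $L_i\to N'$. Under the identifications $H^m(\Hom_R(\GG,N'))=\Ext^m_R(N,N')$ and $H^m(\Hom_S(\LL,N'))=\Ext^m_S(N,N')$, the map induced by $\phi$ is then the standard comparison map, and in particular it is natural in $N'$. At $m=0$ this map recovers the equality $\Hom_R(N,N')=\Hom_S(N,N')$, since any $S$-linear map between two $R$-modules is automatically $R$-linear, so the base case of the induction holds.

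For $m\ge 1$, pick $x\in J$ that is a nonzerodivisor on $N'$; such an $x$ exists because $\depth_J(N')\ge 1$, and then $\depth_J(N'/xN')=m-1$. By Rees's theorem applied both over $R$ and over $S$ (the preimage of $J$ in $S$ also has depth $m$ on $N'$), both $\Ext^i_R(N,N')$ and $\Ext^i_S(N,N')$ vanish for $i<m$, and likewise the analogous groups with coefficients $N'/xN'$ vanish in degrees $<m-1$. Because $xN=0$, multiplication by $x$ acts as zero on all of these $\Ext$ groups, so the long exact sequences arising from $0\to N'\xrightarrow{x}N'\to N'/xN'\to 0$ collapse to isomorphisms
\[
\Ext^{m-1}_R(N,N'/xN')\xrightarrow{\sim}\Ext^m_R(N,N'),\qquad \Ext^{m-1}_S(N,N'/xN')\xrightarrow{\sim}\Ext^m_S(N,N').
\]
These fit into a square with the comparison maps on the sides, and the inductive hypothesis applied to $N'/xN'$ at level $m-1$ shows that the left comparison is an isomorphism, so the right one is too.

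The one point requiring care is the naturality of the comparison map with respect to the short exact sequence in $N'$, needed to know the square above commutes. This is automatic, because the single chain map $\phi:\LL\to\GG$ computes the comparison map for every choice of coefficient module, so $f\mapsto f\circ\phi$ commutes with the connecting homomorphisms of the $\Ext$ long exact sequences.
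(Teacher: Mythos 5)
Your proof is correct and follows essentially the same route as the paper: base case $m=0$ via $\Hom_R(N,N')=\Hom_S(N,N')$, then induction on $m$ using a nonzerodivisor $x\in J$ on $N'$, the collapse of the long exact sequences to the connecting isomorphisms $\Ext^{m-1}(N,N'/xN')\cong\Ext^m(N,N')$ over both $R$ and $S$, and the commuting square induced by the single chain map $\phi$. Your explicit remark on why $\phi$ commutes with the connecting homomorphisms is a welcome clarification of a point the paper leaves implicit.
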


\noindent{\bf Remark:} It is well-known from duality theory that  
\begin{align*}
 H^{m}(\Hom_{R}(\GG,N')) &=  Ext_{R}^{m}(N,N') \\
 &\cong Ext_{S}^{m}(N,N') =  H^{m}(\Hom_{S}(\LL,N'));
\end{align*}
the point of the Lemma is that the comparison map $\phi$ induces the isomorphism, which doesn't seem to follow immediately from the standard proofs.

\begin{proof}
Suppose first that  $m=0$. Since $\phi$ induces the indentity on $N$ it induces the identity on
$\Hom_{R}(N,N') = \Hom_{S}(N, N')$.

We now induct on $m$, and we may suppose $m>0$. We may choose an element $x\in J$ that is a non-zerodivisor on $N'$, 
and consider the diagram
$$
\begin{diagram}[small]
0&\rTo &\Hom_{R}(\GG, N')&\rTo^{x}&  \Hom_{R}(\GG, N') &\rTo&  \Hom_{R}(\GG, N'/xN')& \rTo& 0\\
&&\dTo&&\dTo&&\dTo\\
0&\rTo &\Hom_{L}(\LL, N')&\rTo^{x}&  \Hom_{S}(\LL, N') &\rTo&  \Hom_{S}(\GG, N'/xN')& \rTo& 0\\
\end{diagram}
$$
Since $\Ext_{S}^{m-1}(N,N') = 0 = \Ext_{R}^{m-1}(N,N')$ while $x$ annihilates 
$\Ext_{S}^{m}(N,N') $ and $\Ext_{R}^{m}(N,N')$. Thus
we get a commutative diagram with exact rows
$$
\begin{diagram}[small]
 0&\rTo&H^{m-1}\Hom_{R}(\GG, N')&\rTo& H^{m}\Hom_{R}(\GG, N'/xN') &\rTo&0\\
 &&\dTo^{H^{m-1}\Hom(\phi, N')}&&\dTo^{H^{m}\Hom(\phi, N')}\\
0&\rTo&H^{m-1}\Hom_{S}(\LL, N')&\rTo& H^{m}\Hom_{S}(\LL, N'/xN') &\rTo&0\\
\end{diagram}
$$
and the left-hand vertical map is an isomorphism by induction.
\end{proof}

\section{The case of two regular sequences}\label{two regs}
We now fix the following {\bf NOTATION}: In this section, $S$ will denote a Gorenstein ring. We assume that $R = S/(g_{1}, \dots, g_c)$, where
$\seq gc$ is a regular sequence and that 
$M$ has the form $M = S/(f_1,\dots, f_{\cn})$, 
where $f_{1}, \dots, f_{\cn}$ is a regular sequence generating an ideal containing $g_{1}, \dots g_{c}$. We write
$\KK$ for the Koszul complex of $\seq f\cn$ over $S$.

We choose a matrix
 $A = (a_{i,j}): S^{c} \to S^{n}$ such that 
$$
A \begin{pmatrix}
  f_1\\ \vdots\\ f_{n}
\end{pmatrix}
= 
\begin{pmatrix}
 g_1\\ \vdots\\ g_{c}
\end{pmatrix};
$$
that is, $g_{j} = \sum_{i=1}^n a_{i,j}f_{i}$. We choose an identification of $\bigwedge^{c}S^{c}$ with $S$ and write $\alpha\in \bigwedge^{c}S^{n}$ for the image of 1
under $\bigwedge^{c}A$.
\fix{top exterior power of $S^n$ needed too, to identify the upper half with the dual of the lower half?}

We will give an explicit description the whole Tate resolution of $M$ over $R$ in terms of $\KK$ 
and $\alpha$ following the outline of Proposition~\ref{general MCM approx}. What allows us to do more
in this case than in the general case is the that we can choose the map $\sigma_{c, *}$ to kill all the higher homology
of $\KK\otimes R$.

First, we recall Tate's construction of the minimal $R$-free resolution $\FF$ of $M$. It is usually written as a complex whose underlying graded free module is the tensor product of $R\otimes \KK$ with $\D(R^{c})$, the divided power algebra on the free module
$R^{c}$, but for our purposes it will be useful to write it as the total complex of the double complex:
\begin{center}
\begin{tikzpicture}  
 [every node/.style={scale=.9},  auto]
\node(0){$R$};
\node(1)[node distance = 1.15cm, right of=0]{$\bigwedge^{1}$};
\node(2)[node distance = 1.2cm, right of=1]{$\bigwedge^{2}$};
\node(3)[node distance = 1cm, right of=2]{};
\node(4)[node distance = 1cm, right of=3]{$\cdots$};
\node(5)[node distance = 1cm, right of=4]{};
\node(6)[node distance = 1.2cm, right of=5]{$\bigwedge^{k}$};
\node(7)[node distance = 1.2cm, right of=6]{};
\node(8)[node distance = .5cm, right of=7]{$\cdots$};
\node(9)[node distance = .5cm, right of=8]{};
\node(10)[node distance = 1.3cm, right of=9]{$\bigwedge^{\cn}$};
\node(11)[node distance = 2cm, right of=10]{$0$};
\node(12)[node distance = 1.4cm, right of=11]{};
\node(label)[node distance=3cm, below of=1]{\large{$\FF:$}};
\node(a1)[node distance = 1cm, below of=1]{$\D_{1}$};
\node(a2)[node distance = 1cm, below of=2]{$\D_{1}\bigwedge^{2}$};
\node(a3)[node distance = 1cm, below of=3]{};
\node(a4)[node distance = 1cm, below of=4]{$\cdots$};
\node(a5)[node distance = 1cm, below of=5]{};
\node(a6)[node distance = 1cm, below of=6]{$\D_{1}\bigwedge^{k-1}$};
\node(a7)[node distance = 1cm, below of=7]{};
\node(a8)[node distance = 1cm, below of=8]{$\cdots$};
\node(a9)[node distance = 1cm, below of=9]{};
\node(a10)[node distance = 1cm, below of=10]{$\D_{1}\bigwedge^{\cn-1}$};
\node(a11)[node distance = 1cm, below of=11]{$\D_{1}\bigwedge^{\cn}$};
\node(a12)[node distance = 1cm, below of=12]{};
\node(b2)[node distance = 1cm, below of=a2]{$\D_{2}$};
\node(b3)[node distance = 1cm, below of=a3]{};
\node(b4)[node distance = 1cm, below of=a4]{$\cdots$};
\node(b5)[node distance = 1cm, below of=a5]{};
\node(b6)[node distance = 1cm, below of=a6]{$\D_{2}\bigwedge^{k-2}$};
\node(b7)[node distance = 1cm, below of=a7]{};
\node(b8)[node distance = 1cm, below of=a8]{$\cdots$};
\node(b9)[node distance = 1cm, below of=a9]{};
\node(b10)[node distance = 1cm, below of=a10]{$\D_{2}\bigwedge^{\cn-2}$};
\node(b11)[node distance = 1cm, below of=a11]{$\D_{2}\bigwedge^{\cn-1}$};
\node(b12)[node distance = 1cm, below of=a12]{};
\node(b+3)[node distance = 1.5cm, below of=b3]{};
\node(b+4)[node distance = 1.5cm, below of=b4]{};
\node(b+5)[node distance = 1.5cm, below of=b5]{};
\node(b+6)[node distance = 1.5cm, below of=b6]{$\vdots$};
\node(b+7)[node distance = 1.5cm, below of=b7]{};
\node(b+8)[node distance = 1.5cm, below of=b8]{$\cdots$};
\node(b+9)[node distance = 1.5cm, below of=b9]{};
\node(b+10)[node distance = 1.5cm, below of=b10]{$\vdots$};
\node(b+11)[node distance = 1.5cm, below of=b11]{$\cdots$};
\node(c6)[node distance = 3.2cm, below of=b6]{$\D_{k}\bigwedge^{k-1}$};
\node(c7)[node distance = 3.2cm, below of=b7]{};
\node(c8)[node distance = 3.2cm, below of=b8]{$\cdots$};
\node(c9)[node distance = 3.2cm, below of=b9]{};
\node(c10)[node distance = 3.2cm, below of=b10]{$\D_{k}\bigwedge^{\cn-k}$};
\node(c11)[node distance = 3.2cm, below of=b11]{$\D_{2}\bigwedge^{\cn-k+1}$};
\node(c12)[node distance = 3.2cm, below of=b12]{};
\node(d7)[node distance = .9cm, below of=c7]{};
\node(d8)[node distance = .9cm, below of=c8]{$\cdots$};
\node(cd9)[node distance = .9cm, below of=c9]{};
\node(d10)[node distance = .9cm, below of=c10]{$\vdots$};
\node(d11)[node distance = .9cm, below of=c11]{$\cdots$};

\draw[<-](0)to node{} (1);
\draw[<-](1)to node{} (2);
\draw[<-](2)to node{} (3);
\draw[<-](5)to node{} (6);
\draw[<-](6)to node{} (7);
\draw[<-](9)to node{} (10);
\draw[<-](10)to node{} (11);
\draw[<-](11)to node{} (12);
\draw[<-](a1)to node{} (a2);
\draw[<-](a2)to node{} (a3);
\draw[<-](a5)to node{} (a6);
\draw[<-](a6)to node{} (a7);
\draw[<-](a9)to node{} (a10);
\draw[<-](a10)to node{} (a11);
\draw[<-](a11)to node{} (a12);
\draw[<-](b2)to node{} (b3);
\draw[<-](b5)to node{} (b6);
\draw[<-](b6)to node{} (b7);
\draw[<-](b9)to node{} (b10);
\draw[<-](b10)to node{} (b11);
\draw[<-](b11)to node{} (b12);
\draw[<-](c6)to node{} (c7);
\draw[<-](c9)to node{} (c10);
\draw[<-](c10)to node{} (c11);
\draw[<-](c11)to node{} (c12);
\draw [style=dashed] (a1) to node{} (0);
\draw[->] (a1) to node{} (1);
\draw[->] (a2) to node{} (2);
\draw[->] (a6) to node{} (6);
\draw[->] (a10) to node{} (10);
\draw [style=dashed] (b2) to node{} (a1);
\draw[->] (b2) to node{} (a2);
\draw[->] (b6) to node{} (a6);
\draw[->] (b10) to node{} (a10);
\draw [style=dashed] (c6) to node{} (b2);
\draw [style=dashed] (d7) to node{} (c6); dt
\end{tikzpicture}
\end{center}
where for compactness we have written $\bigwedge^{i}$ for $R\otimes \bigwedge^{i}S^{\cn}$ and $\D_{i}$ for the $i$-th divided power of $R^{c}$ and suppressed the tensor product signs, so that for example we have written
$\D_{2}\bigwedge^{k}$ in place of $\D_{2}R^{c}\otimes_{S}\bigwedge^{k}R^{\cn}$.

By Proposition~\ref{general MCM approx} the Tate resolution of $M$ is the mapping cone of any map of complexes
$\phi: \FF\to \FF^*[-c]$ that induces an isomorphism 
$$
M= H_{0}(\FF) \to H_{0}(\FF^{*}[-c]) = H_{c}(\FF^{*}) \cong M.
$$
 In this case the maps  $\sigma_{c,i}:  K_{i} \to K_{i+c}$ of
Theorem~\ref{sigma} take a simple form. (A similar result holds for all the $\sigma_{i,j}$, but we do not need this.)

\begin{proposition}\label{comparison for Koszul complexes} Let $e'_{1},\dots e'_{n}$ be a basis of $S^{n}$, and let $\delta_{1}: S^{n} \to S$ send $e'_{i}\to f_{i}$. Let $(\KK,\delta)$ be the Koszul complex
$$
\KK: S\lTo^{\delta_{1}}S^{n}\lTo^{\delta_{2}}\bigwedge^{2}S^{n}\lTo \cdots
$$
The homotopy for an element $g = \sum_{i} a_{i}f_{i}$ on $\KK$ is exterior multiplication by $\sum_ia_{i}e'_{i}$. Thus if
$A = (a_{i,j}): S^{c} \to S^{n}$ is a $c\times n$ matrix such that 
$$
A \begin{pmatrix}
  f_1\\ \vdots\\ f_{n}
\end{pmatrix}
= 
\begin{pmatrix}
 g_1\\ \vdots\\ g_{c}
\end{pmatrix}
$$
then $\sigma_{c,0}: \KK\to \KK[-c]$ may be taken to be exterior multiplication by 
the image in $\bigwedge^{c}S^{n}$ of a generator $e_{1}\wedge\cdots\wedge e_{c}$
 of $\bigwedge^{c}S^{c}$ under the map 
$$
S \cong \bigwedge^{c}S^{c} \rTo^{\bigwedge^{c}A}\bigwedge^{c}S^{n}.
$$
\end{proposition}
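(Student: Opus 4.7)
The plan is to establish the first assertion directly from the super-Leibniz identity for the exterior algebra $\bigwedge^{\bullet} S^{n}$, and then to combine that identification with Proposition~\ref{compositions} to obtain the second.

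First I would observe that $\delta$ is contraction by the linear form $\phi : S^{n} \to S$ sending $e'_{i} \mapsto f_{i}$, and that contraction by $\phi$ is an (odd) derivation of $\bigwedge^{\bullet} S^{n}$. For any $\omega \in S^{n}$, writing $\tau(v) := \omega \wedge v$, the Leibniz rule then gives
\[
\delta \tau(v) + \tau \delta(v) \;=\; \delta(\omega \wedge v) + \omega \wedge \delta(v) \;=\; \phi(\omega)\, v .
\]
With $\omega = \sum_{i} a_{i} e'_{i}$ this becomes $\phi(\omega) = g$, so $\tau$ is a null-homotopy for multiplication by $g$ on $\KK$. This is the first claim, and it reduces to a direct sign-count on a basis monomial $e'_{i_{1}}\wedge \cdots \wedge e'_{i_{k}}$ using the explicit formula for $\delta$.

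For the second claim, apply the first with $g$ replaced by $g_{j}$ and $\omega$ replaced by $\omega_{j} := A(e_{j}) = \sum_{i} a_{i,j} e'_{i}$; then $\tau_{j} := \omega_{j} \wedge -$ serves as a homotopy for multiplication by $g_{j}$. By Proposition~\ref{compositions} we may then choose
\[
\sigma_{c,0} \;=\; \tau_{1} \circ \tau_{2} \circ \cdots \circ \tau_{c},
\]
which as an endomorphism of $\KK$ is left exterior multiplication by $\omega_{1} \wedge \omega_{2} \wedge \cdots \wedge \omega_{c}$. By definition of $\bigwedge^{c} A$, this element is precisely the image of $e_{1} \wedge \cdots \wedge e_{c}$, completing the proof.

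The substantive content lies entirely in the first step: the super-Leibniz identity for $\delta$ and left exterior multiplication. Everything else is bookkeeping. In particular, the ordering ambiguity flagged in the remark following Proposition~\ref{compositions} is automatic here, because exterior multiplications by $1$-forms anti-commute strictly (not merely modulo $I$), so the composition $\tau_{1}\circ\cdots\circ\tau_{c}$ already equals left wedge with $\omega_{1}\wedge\cdots\wedge\omega_{c}$ on the nose.
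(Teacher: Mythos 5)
Your proof is correct and follows essentially the same route as the paper: the paper likewise observes that exterior multiplication by $e'_i$ is a homotopy for $f_i$ (checked directly, with your Leibniz-rule argument being just a cleaner way to package that verification), extends by linearity to get the homotopy for $g$, and then invokes Proposition~\ref{compositions} to identify $\sigma_{c,0}$ with exterior multiplication by $\bigwedge^{c}A(e_{1}\wedge\cdots\wedge e_{c})$.
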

\begin{proof}
It is easy to check directly that a homotopy for $f_{i}$ on $\KK$ is exterior multiplication by $e'_{i}$. The given formula for a homotopy for $g$ follows by linearity. 

By Proposition~\ref{compositions} the maps $\sigma_{i,j}$ are defined by compositions of the homotopies
$\tau_{j}$ for the $g_{j}$ on $\KK$, and the composition  $\tau_{1}\circ\cdots\circ\tau_{c}$ is thus
exterior multiplication by $\bigwedge^{c}A(e_{1}\wedge\cdots\wedge e_{c})$, as claimed.
\end{proof}

\begin{theorem}\label{CM approx for 2 reg seqs}
With notation as above, let $\KK$ be the Koszul complex resolving $M$ over $S$ and let $\FF$ be the Eisenbud-Shamash resolution of $M$ over $R$. Let $\phi': R\otimes \KK \to R\otimes \KK[-c] \cong R\otimes\KK^{*}[c]$ be the composition of the map defined in Proposition~\ref{comparison for Koszul complexes} with the isomorphism
induced by a choice of isomorphism $\beta: \bigwedge^{n}\S^{n} \to S$. Let $\phi: \FF \to \FF^{*}[c]$ be the composition
$$
\FF\rTo^{\pi} R\otimes \KK \rTo^{\phi'}R\otimes\KK^{*}[c] \rTo^{\pi^{*}} \FF^{*}[c].
$$
where $\pi$ is the projection with kernel $\oplus_{i\geq 1}\D_{i}(R^{c)}\otimes \KK[-i]$
The map $\phi$ is a homomorphism of complexes and maps $M = H_{0}(\FF)$ isomorphically to
$H_{0}(\FF^{*}[c])$. Thus the mapping cone $\MM(\phi)$ of $\phi$ is a Tate resolution of $M$ over $R$. If 
$I\subset \mm J$, then this Tate resolution is minimal.
\end{theorem}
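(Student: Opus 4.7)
The plan is to verify the theorem's claims in order: that $\phi$ is a chain map, that it induces the correct isomorphism on $\HH_{0}$ -- so that by Proposition~\ref{general MCM approx} (applied with $\GG=\FF$, legitimate because $M=S/J$ is self-dual as an $S$-module, hence $M\cong M^{\vee}$ over $R$) the mapping cone is a Tate resolution -- and finally that $\MM(\phi)$ is minimal when $I\subset\mm J$.

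For the chain-map property, I would decompose $d_{\FF}$ into its horizontal Koszul part $d_{h}$ and a vertical part $d_{v}$ that decreases divided-power degree by one and increases Koszul degree by one via the homotopies $\tau_{k}$ of Proposition~\ref{compositions}. On the row-$0$ subcomplex $R\otimes\KK\subset\FF$ where $\pi$ restricts to the identity, $d_{\FF}$ agrees with $d_{\KK}$ and the chain-map identity reduces to the chain-map property of $\phi'$ from Proposition~\ref{comparison for Koszul complexes}. For $x$ in divided-power row $\geq 1$ we have $\pi(x)=0$, so $\phi(x)=0$; if $x$ lies in row $\geq 2$ then also $\pi(d_{\FF}(x))=0$ and both sides vanish trivially. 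The substantive case is $x$ in row $1$: here $d_{v}(x)$ lands in row $0$ via a single $\tau_{k}$, so one must show $\phi'\circ\tau_{k}=0$. Since $\phi'=\tau_{1}\circ\cdots\circ\tau_{c}$ is exterior multiplication by $\alpha_{1}\wedge\cdots\wedge\alpha_{c}$ with $\alpha_{k}=\sum_{i}a_{i,k}e'_{i}\in S^{n}$ (by Propositions~\ref{compositions} and~\ref{comparison for Koszul complexes}), the composition $\phi'\circ\tau_{k}$ is exterior multiplication by the $(c+1)$-fold wedge $\alpha_{1}\wedge\cdots\wedge\alpha_{c}\wedge\alpha_{k}$, which vanishes by the repeated $\alpha_{k}$ factor. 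The transposed identity $\iota_{\alpha_{k}}\circ\phi'=0$, obtained via the Koszul self-duality induced by $\beta$, likewise kills the row-$\geq 1$-dual component of $d_{\FF^{*}[c]}\phi$ at row-$0$ elements, completing the verification.

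For the $\HH_{0}$ assertion, Theorem~\ref{sigma} and Lemma~\ref{Tor} realize $\phi'$ as the canonical isomorphism $M=\HH_{0}(R\otimes\KK)\to \Tor^{S}_{c}(R,M)\cong M$; Lemma~\ref{inducing iso} (applied with $N=M^{\vee}$, $N'=R$) then transfers this to the required isomorphism $\HH_{0}(\FF)\to \HH_{0}(\FF^{*}[c])\cong M^{\vee}\cong M$, and Proposition~\ref{general MCM approx} certifies the mapping cone as a Tate resolution. For minimality under $I\subset\mm J$, choose $A$ with entries in $\mm$ (possible because each $g_{k}\in\mm J$); then $\alpha$ has coefficients equal to $c\times c$ minors of $A$, lying in $\mm^{c}\subset\mm$, so the entries of $\phi$ lie in $\mm$. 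The differentials of $\FF$ and $\FF^{*}[c]$, built from the $f_{i}\in J\subset\mm$ and from the $a_{i,j}\in\mm$ via the $\tau_{k}$, have entries in $\mm$; thus every entry of $d_{\MM(\phi)}$ lies in $\mm$ and $\MM(\phi)$ is minimal. The main obstacle is the careful sign bookkeeping in the chain-map verification, particularly the transposed identity on the dual side.
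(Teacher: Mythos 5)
Your proposal is correct and takes essentially the same route as the paper's proof: the key chain-map vanishing is the same fact in different clothing (wedging with $\alpha_{1}\wedge\cdots\wedge\alpha_{c}\wedge\alpha_{k}=0$ is exactly the vanishing of the $(c+1)\times(c+1)$ minors of $A$ with a repeated row), the dual components are handled by the same self-duality under $\beta$, and the $H_{0}$ isomorphism is obtained from the same ingredients (Theorem~\ref{sigma} via Proposition~\ref{comparison for Koszul complexes}, together with Lemma~\ref{inducing iso} applied with $N'=R$). Your explicit minimality argument (all entries of $A$, hence the $c\times c$ minors, lie in $\mm$ when $I\subset\mm J$) supplies a step the paper's proof leaves implicit, but it is not a different approach.
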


Note that the complex $\KK$ is a subcomplex of $\FF$, \emph{not} a quotient complex, and thus $\pi$
and $\pi^{*}$ are \emph{not} maps of complexes. Nevertheless, the Theorem asserts that
$\phi$ is a map of complexes.

\begin{proof}[Proof of Theorem~\ref{CM approx for 2 reg seqs}]
Consider the doubly infinite diagram whose $i$-th and $i+1$-st columns are:
\begin{small}
 $$
\begin{diagram}[small]
&&\uTo&&\uTo\\
\cdots&\lTo&\S_{1}(R^{c})\otimes\bigwedge^{i+c+1}R^{n}&\lTo&\S_{1}(R^{c*})\otimes\bigwedge^{i+c+2}R^{n}&\lTo&\cdots\\
\cdots&&\uTo&&\uTo&&\cdots\\
\cdots&\lTo&\bigwedge^{i+c}R^{n}&\lTo&\bigwedge^{i+c+1}R^{n}&\lTo&\cdots\\
&&\uTo^{(-1)^{i}\phi'_{i}}&&\uTo^{(-1)^{i+1}\phi'_{i+1}}\\
\cdots&\lTo&\bigwedge^{i}R^{n}&\lTo&\bigwedge^{i+1}R^{n}&\lTo&\cdots\\
\cdots&&\uTo&&\uTo&&\cdots\\
\cdots&\lTo&\D_{1}(R^{c})\otimes\bigwedge^{i-1}R^{n}&\lTo&\D_{1}(R^{c})\otimes\bigwedge^{i}R^{n}&\lTo&\cdots\\
\end{diagram}
$$
\end{small}
with the term $\bigwedge^{i}R^{n}$ in position $(i,0)$. The maps in the bottom two rows of the diagram
are those of the $R$ free Eisenbud-Shamash resolution of $S/(\seq fn)$. Using the isomorphism $\beta$ we may identify $\bigwedge^{j}R^{n}$ with $\bigwedge^{n-j}(R^{n*})$, and with this identification, taking into account that $(\D_{i}R^{c})^{*}$ is is naturally isomorphic to $\S_{i}(R^{c*})$, the upper two rows of the diagram are isomorphic to the dual of the lower two rows, shifted $c$ steps to the left. Thus each row is itself a complex and the squares in the lower two rows, and dually in the upper two rows, commute up to sign.

We claim that, with the map $\phi'$ between the two middle rows,  the diagram is a double complex: that is, the squares in the middle two rows commute up to sign, and the vertical maps as well as the horizontal ones compose to zero. 

The lower of the middle two rows is simply the Koszul complex of $\seq fn$, and the upper of the middle two rows is the same Koszul complex, shifted $c$ steps to the left. We have already shown in Proposition~\ref{comparison for Koszul complexes} that the maps $\phi_{i}$ commute with the differentials of the these Koszul complexes.

We must still show that the composition of consecutive vertical maps is 0. But the columns of the diagram are exactly the complexes first described in \cite[Section 2]{BE} and \cite{Ki}, and given an exposition in \cite[Appendix A.2.10]{E}. (See also the more conceptual construction in \cite{W}, which follows ideas of \cite{Ke}.) 

However, as this is the only fact about the vertical columns that we need, it seems worth pointing out that the result is elementary, a direct extension of ``Cramer's rule'' for solving linear equations: since the whole diagram is self-dual, it may be reduced to showing that the composition
$$ 
 \D_{1}(R^{c})\otimes \bigwedge^{i-1}R^{n} = R^{c} \otimes \bigwedge^{i-1}R^{n} 
 \to   \bigwedge^{i} R^{n}
 \rTo^{\phi'} \bigwedge^{i+c} R^{n}
$$
is zero, and direct computation shows that the components of this map are the $(c+1)\times (c+1)$ minors of the 
matrix derived from $A$ by repeating a row.

Finally, we must show that the composed map of complexes $\pi\circ \phi'\circ \pi$
induces a isomorphism $H_{0}(\FF)\to H_{0}(\FF^{*}[c]) = H_{c}(\FF^{*})$.
To this end, consider the maps of complexes
$$
R\otimes\KK \rInto^{\iota} \FF \rTo^{\pi\circ \phi'\circ \pi} \FF^{*}[c] \rOnto^{\iota^{*}} R\otimes\KK^{*}[c]
$$
where $\iota$ is the natural inclusion of complexes. It is obvious that
$\iota$ induces an isomorphism $M = H_{0}(R\otimes\KK) \to H_{0}(\FF)$. Lemma~\ref{inducing iso} shows that
$\iota^{*}$ induces an isomorphism $H_{0}(\FF^{*}[c])= H_{c}(\FF^{*})\to H_{c}(R\otimes\KK^{*})= H_{0}(R\otimes\KK^{*}[c])$.
Finally, the composition $\iota^{*}\circ (\pi\circ \phi'\circ \pi) \circ \iota$ is just $\phi'$ composed with the
isomorphism $R\otimes\KK[-c] \cong R\otimes\KK^{*}[c]$ induced by $\beta$. This induces an isomorphism
$H_{0}(R\otimes\KK) \to H_{0}(R\otimes\KK^{*}[c])$ by Proposition~\ref{comparison for Koszul complexes}. Thus 
$$
\pi\circ \phi'\circ \pi: H_{0}\FF \to H_{0}(\FF^{*}[c])
$$
is an isomorphism as well, completing the proof. 
\end{proof}
Figure~\ref{Figure 2} shows the Tate Resolution of $S/(\seq f\cn)$ as an $R = S/(\seq gc)$-module, in case $\cn-c$ is odd. The bold arrows are given by wedge product with $\alpha$. The columns  are the complexes  ${\mathcal C}_{i}$ that appear in Figure A2.6 of \cite{E}. The dashed line passes through the terms of homological degree 0.
\begin{center}
\begin{tikzpicture}[thick,scale=0.55, every node/.style={transform shape}]

\node(0){$R$};
 \node(m1)[node distance = 2.8cm, left of=0]{0}; 
\node(1)[node distance = 2.8cm, right of=0]{$\bigwedge^{1}$};
\node(2)[node distance = 2.9cm, right of=1]{$\bigwedge^{2}$};
\node(3)[node distance = 1.8cm, right of=2]{};
\node(4)[node distance = 1cm, right of=3]{$\cdots$};
\node(5)[node distance = .2cm, right of=4]{};
\node(6)[node distance = 1.5cm, right of=5]{$\bigwedge^{k}$};
\node(7)[node distance = 1.4cm, right of=6]{};
\node(8)[node distance = .4cm, right of=7]{$\cdots$};
\node(9)[node distance = .2cm, right of=8]{};
\node(95)[node distance = 1cm, right of=9]{$\cdots$};
\node(10)[node distance = 1.7cm, right of=95]{$\bigwedge^{\cn-c}$};
\node(11)[node distance = 2.4cm, right of=10]{$\bigwedge^{\cn-c+1}$};
\node(12)[node distance = 1.6cm, right of=11]{};

\node(am1)[node distance = 1cm, below of=m1]{$$};
\node(a0)[node distance = 1cm, below of=0]{$0$};
\node(a1)[node distance = 1cm, below of=1]{$\D_{1}$};
\node(a2)[node distance = 1cm, below of=2]{$\D_{1}\bigwedge^{1}$};
\node(a3)[node distance = 1cm, below of=3]{};
\node(a4)[node distance = 1cm, below of=4]{$\cdots$};
\node(a5)[node distance = 1cm, below of=5]{};
\node(a6)[node distance = 1cm, below of=6]{$\D_{1}\bigwedge^{k-1}$};
\node(a7)[node distance = 1cm, below of=7]{};
\node(a8)[node distance = 1cm, below of=8]{$\cdots$};
\node(a9)[node distance = 1cm, below of=9]{};
\node(a95)[node distance = 1cm, below of=95]{$\cdots$};
\node(a10)[node distance = 1cm, below of=10]{$\D_{1}\bigwedge^{\cn-c-1}$};
\node(a11)[node distance = 1cm, below of=11]{$\D_{1}\bigwedge^{\cn-c}$};
\node(a12)[node distance = 1cm, below of=12]{};

\node(b1)[node distance = 1cm, below of=a1]{0};
\node(b2)[node distance = 1cm, below of=a2]{$\D_{2}$};
\node(b3)[node distance = 1cm, below of=a3]{};
\node(b4)[node distance = 1cm, below of=a4]{$\cdots$};
\node(b5)[node distance = 1cm, below of=a5]{};
\node(b6)[node distance = 1cm, below of=a6]{$\D_{2}\bigwedge^{k-2}$};
\node(b7)[node distance = 1cm, below of=a7]{};
\node(b8)[node distance = 1cm, below of=a8]{$\cdots$};
\node(b9)[node distance = 1cm, below of=a9]{};
\node(b95)[node distance = 1cm, below of=a95]{$\cdots$};
\node(b10)[node distance = 1cm, below of=a10]{$\D_{2}\bigwedge^{\cn-c-2}$};
\node(b11)[node distance = 1cm, below of=a11]{$\D_{2}\bigwedge^{\cn-c-1}$};
\node(b12)[node distance = 1cm, below of=a12]{};

\node(c3)[node distance = 1cm, below of=b3]{};
\node(c4)[node distance = 1cm, below of=b4]{};
\node(c5)[node distance = 1cm, below of=b5]{};
\node(c6)[node distance = 1cm, below of=b6]{$\vdots$};
\node(c7)[node distance = 1cm, below of=b7]{};
\node(c8)[node distance = 1cm, below of=b8]{$\cdots$};
\node(c9)[node distance = 1cm, below of=b9]{};
\node(c95)[node distance = 1cm, below of=b95]{};
\node(c10)[node distance = 1cm, below of=b10]{$\vdots$};
\node(c11)[node distance = 1cm, below of=b11]{$\cdots$};

\node(d5)[node distance = 2.1cm, below of=b5]{0};
\node(d6)[node distance = 2.1cm, below of=b6]{$\D_{k}$};
\node(d7)[node distance = 2.1cm, below of=b7]{};
\node(d8)[node distance = 2.1cm, below of=b8]{$\cdots$};
\node(d9)[node distance = 2.1cm, below of=b9]{};
\node(d95)[node distance = 2.1cm, below of=b95]{$\cdots$};
\node(d10)[node distance = 2.1cm, below of=b10]{$\D_{k}\bigwedge^{\cn-c-k}$};
\node(d11)[node distance = 2.1cm, below of=b11]{$\D_{2}\bigwedge^{\cn-c-k+1}$};
\node(d12)[node distance = 2.1cm, below of=b12]{};

\node(e7)[node distance = .9cm, below of=d7]{};
\node(e8)[node distance = .9cm, below of=d8]{$\ddots$};
\node(e9)[node distance = .9cm, below of=d9]{};
\node(e95)[node distance = .9cm, below of=d95]{$\ddots$};
\node(e10)[node distance = .9cm, below of=d10]{$\ddots$};
\node(e11)[node distance = .9cm, below of=d11]{$\ddots$};

\draw[<-](m1)to node{} (0);
\draw[<-](0)to node{} (1);
\draw[<-](1)to node{} (2);
\draw[<-](2)to node{} (3);
\draw[<-](5)to node{} (6);
\draw[<-](6)to node{} (7);
\draw[<-](95)to node{} (10);
\draw[<-](10)to node{} (11);
\draw[<-](11)to node{} (12);

\draw[<-](a0)to node{} (a1);
\draw[<-](a1)to node{} (a2);
\draw[<-](a2)to node{} (a3);
\draw[<-](a5)to node{} (a6);
\draw[<-](a6)to node{} (a7);
\draw[<-](a95)to node{} (a10);
\draw[<-](a10)to node{} (a11);
\draw[<-](a11)to node{} (a12);

\draw[<-](b1)to node{} (b2);
\draw[<-](b2)to node{} (b3);
\draw[<-](b5)to node{} (b6);
\draw[<-](b6)to node{} (b7);
\draw[<-](b95)to node{} (b10);
\draw[<-](b10)to node{} (b11);
\draw[<-](b11)to node{} (b12);

\draw[<-](d5)to node{} (d6);
\draw[<-](d6)to node{} (d7);
\draw[<-](d95)to node{} (d10);
\draw[<-](d10)to node{} (d11);
\draw[<-](d11)to node{} (d12);

\node(a'0)[node distance = 1.15cm, above of=0]{$\bigwedge^{c}$};
\node(a'1)[node distance = 1.15cm, above of=1]{$\bigwedge^{c+1}$};
\node(a'2)[node distance = 1.15cm, above of=2]{$\bigwedge^{c+2}$};
\node(a'3)[node distance = 1.15cm, above of=3]{};
\node(a'4)[node distance = 1.15cm, above of=4]{$\cdots$};
\node(a'5)[node distance = 1.15cm, above of=5]{};
\node(a'6)[node distance = 1.15cm, above of=6]{$\bigwedge^{c+k}$};
\node(a'7)[node distance = 1.15cm, above of=7]{};
\node(a'8)[node distance = 1.15cm, above of=8]{$\cdots$};
\node(a'9)[node distance = 1.15cm, above of=9]{};
\node(a'95)[node distance = 1.15cm, above of=95]{$\bigwedge^{\cn-1}$};
\node(a'10)[node distance = 1.15cm, above of=10]{$\bigwedge^{\cn}$};
\node(a'11)[node distance = 1.15cm, above of=11]{0};
\node(a'12)[node distance = 1.15cm, above of=12]{};

\node(a'm1)[node distance = 1.15cm, above of=m1]{$\bigwedge^{c-1}$};

\node(b'm1)[node distance = 1cm, above of=a'm1]{$\S_{1}\bigwedge^{c}$};
\node(b'0)[node distance = 1cm, above of=a'0]{$\S_{1}\bigwedge^{c+1}$};
\node(b'1)[node distance = 1cm, above of=a'1]{$\S_{1}\bigwedge^{c+2}$};
\node(b'2)[node distance = 1cm, above of=a'2]{$\S_{1}\bigwedge^{c+3}$};
\node(b'3)[node distance = 1cm, above of=a'3]{};
\node(b'4)[node distance = 1cm, above of=a'4]{$\cdots$};
\node(b'5)[node distance = 1cm, above of=a'5]{$$};
\node(b'6)[node distance = 1cm, above of=a'6]{$\S_{1}\bigwedge^{c+k+1}$};
\node(b'7)[node distance = 1cm, above of=a'7]{};
\node(b'8)[node distance = 1cm, above of=a'8]{$\cdots$};
\node(b'9)[node distance = 1cm, above of=a'9]{};
\node(b'95)[node distance = 1cm, above of=a'95]{$\S_{1}\bigwedge^{\cn}$};
\node(b'10)[node distance = 1cm, above of=a'10]{$0$};
c'
\node(c'm1)[node distance = 1cm, above of=b'm1]{$\vdots$};
\node(c'0)[node distance = 1cm, above of=b'0]{$\vdots$};
\node(c'1)[node distance = 1cm, above of=b'1]{$\vdots$};
\node(c'2)[node distance = 1cm, above of=b'2]{$\vdots$};
\node(c'3)[node distance = 1cm, above of=b'3]{};
\node(c'4)[node distance = 1cm, above of=b'4]{};
\node(c'6)[node distance = 1cm, above of=b'6]{$\vdots$};
\node(c'7)[node distance = 1cm, above of=b'7]{};
\node(c'8)[node distance = 1cm, above of=b'8]{};

\node(d'm1)[node distance = 2.1cm, above of=b'm1]{$\S_{\cn-c-k}\bigwedge^{\cn-k-1}$};
\node(d'0)[node distance = 2.1cm, above of=b'0]{$\S_{\cn-c-k}\bigwedge^{\cn-k}$};
\node(d'1)[node distance = 2.1cm, above of=b'1]{$\S_{\cn-c-k}\bigwedge^{\cn-k+1}$};
\node(d'2)[node distance = 2.1cm, above of=b'2]{$\S_{\cn-c-k}\bigwedge^{\cn-k+2}$};
\node(d'3)[node distance = 2.1cm, above of=b'3]{};
\node(d'4)[node distance = 2.1cm, above of=b'4]{$\cdots$};
\node(d'5)[node distance = 2.1cm, above of=b'5]{$$};

\node(d'6)[node distance = 2.1cm, above of=b'6]{$\S_{\cn-c-k}\bigwedge^{\cn}$};
\node(d'7)[node distance =2.1cm, above of=b'8]{0};
\node(e'm1)[node distance = .5cm, above of=d'm1]{$\ddots$};
\node(e'0)[node distance = .5cm, above of=d'0]{$\ddots$};
\node(e'1)[node distance = .5cm, above of=d'1]{$\ddots$};
\node(e'5)[node distance = .5cm, above of=d'5]{$\ddots$};

\draw[<-](a'm1)to node{} (a'0);
\draw[<-](a'0)to node{} (a'1);
\draw[<-](a'1)to node{} (a'2);
\draw[<-](a'2)to node{} (a'3);
\draw[<-](a'5)to node{} (a'6);
\draw[<-](a'6)to node{} (a'7);
\draw[<-](a'95)to node{} (a'10);
\draw[<-](a'10)to node{} (a'11);

\draw[<-](b'm1)to node{} (b'0);
\draw[<-](b'0)to node{} (b'1);
\draw[<-](b'1)to node{} (b'2);
\draw[<-](b'2)to node{} (b'3);
\draw[<-](b'5)to node{} (b'6);
\draw[<-](b'6)to node{} (b'7);
\draw[<-](b'95)to node{} (b'10);

\draw[<-](d'm1)to node{} (d'0);
\draw[<-](d'0)to node{} (d'1);
\draw[<-](d'1)to node{} (d'2);
\draw[<-](d'2)to node{} (d'3);

\draw[<-](d'6)to node{} (d'7);

\draw[->] (a1) to node{} (1);
\draw[->] (a2) to node{} (2);
\draw[->] (a6) to node{} (6);
\draw[->] (a10) to node{} (10);

\draw[->] (b2) to node{} (a2);
\draw[->] (b6) to node{} (a6);
\draw[->] (b10) to node{} (a10);



\draw [style=dashed] (a'1) to node{} (0);
\draw[<-] (a'0) [style=ultra thick] to node{} (0);
\draw[<-] (a'1) [style=ultra thick] to node{} (1);
\draw[<-] (a'2) [style=ultra thick]to node{} (2);
\draw[<-] (a'6) [style=ultra thick]to node{} (6);
\draw[<-] (a'10)[style=ultra thick]to node{} (10);

\draw [style=dashed] (b'2) to node{} (a'1);
\draw[<-] (b'm1) to node{} (a'm1);
\draw[<-] (b'0) to node{} (a'0);
\draw[<-] (b'1) to node{} (a'1);
\draw[<-] (b'2) to node{} (a'2);
\draw[<-] (b'6) to node{} (a'6);
\draw[<-] (b'95) to node{} (a'95);

\draw [style=dashed] (d'6) to node{} (b'2);
\end{tikzpicture}\label{Figure 2}
\vskip -.25cm
{\bf Figure~\ref{Figure 2}}
\end{center}

Here the entries of the matrices represented by horizontal arrows are the $f_{i}$, and the entries of the 
matrices represented by the vertical arrows are the $g_{j}$, except for the bold arrows, which where the
entries are the $c\times c$ minors of $A$.

The part of the complex represented by the lower half of the diagram
is infinite, and each row has is a tensor product of a $\D_{k}$ with the Koszul complex on $\seq f\cn$. 
Each row is a tensor product of a $\D_{k}$ or an $\S_{k}$ with the Koszul complex on $\seq f\cn$.
The columns, on the other hand, are the complexes  ${\mathcal C}_{i}$ that appear in Figure A2.6 of \cite{E}.

\begin{corollary}\label{explicit MCM}
The minimal free resolution of the maximal Cohen-Macaulay approximation $M'$ of $M$ has the form shown in Figure~\ref{Figure3}. Thus $M'$ has no free summand, and requires
$$
1+ \sum_{1\leq i\leq (n-c-1)/2}{n\choose c+1+2i}{c-1+i\choose i}
$$
generators.
\end{corollary}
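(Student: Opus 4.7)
The plan is to deduce the corollary from Theorem~\ref{CM approx for 2 reg seqs} by reading off the minimal free resolution of the essential MCM approximation $M'$ directly from the minimal Tate resolution of $M$ over $R$.

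First, I would invoke Theorem~\ref{CM approx for 2 reg seqs} together with the hypothesis $I \subset \mm J$ to conclude that the mapping cone $\MM(\phi)$ displayed in Figure~\ref{Figure 2} is the minimal Tate resolution of $M$ over $R$. Recalling from the Introduction that the essential MCM approximation $M'$ is obtained from the cokernel of the first differential of this minimal Tate resolution after stripping off any free summands of the full Cohen--Macaulay approximation $N$, the minimal free resolution of $M'$ will be extracted from the portion of $\MM(\phi)$ in homological degrees $\geq 0$.

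Next, I would enumerate the summands at each homological degree using the bicomplex structure of Figure~\ref{Figure 2}. Along the dashed line (homological degree $0$), the contributing summands are the copy of $R$ from the Eisenbud--Shamash corner together with the modules $\S_{i}(R^{c*}) \otimes \bigwedge^{c+1+2i}(R^n)$ for each valid $i$ from the upper dual part. Rearranging Figure~\ref{Figure 2} after removing the free summands of $N$ produces Figure~\ref{Figure3}, from which the count of generators of $M'$ is obtained by summing the ranks of the summands at degree $0$ and combining with the factor of $1$ from the $R$-summand; the binomial coefficients $\binom{n}{c+1+2i}$ and $\binom{c-1+i}{i}$ record the ranks of the two exterior and symmetric factors.

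The main obstacle is to identify precisely which summands at position $0$ of the Tate resolution contribute to free summands of $N$ and hence must be split off to obtain $M'$. I would tackle this by analyzing the differentials in Figure~\ref{Figure 2} emanating from these summands---in particular the bold arrows (wedge product with $\alpha$) and the Koszul differentials with entries $f_i$---and exhibiting an explicit splitting that separates off a free direct summand of the cokernel while leaving an MCM module with no free summand. Verifying this splitting justifies both that $M'$ has no free summand and that the generator count has the form claimed.
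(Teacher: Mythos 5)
Your overall strategy is the right one, and it is essentially the paper's own (implicit) derivation: under the hypothesis $I\subset\mm J$, Theorem~\ref{CM approx for 2 reg seqs} says the complex of Figure~\ref{Figure 2} is the \emph{minimal} Tate resolution of $M$, the essential MCM approximation $M'$ is the cokernel of its first differential (as recalled in the Introduction and in Proposition~\ref{general MCM approx}), and Figure~\ref{Figure3} is nothing but the part of Figure~\ref{Figure 2} in homological degrees $\geq 0$, so the number of generators of $M'$ is the rank of the degree-$0$ term. However, two points in your write-up are genuine gaps. First, your announced ``main obstacle''---deciding which degree-$0$ summands belong to free summands of $N$ and must be split off---is a misreading, and you leave it unresolved. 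Nothing is split off in passing from Figure~\ref{Figure 2} to Figure~\ref{Figure3}: the cokernel of the first differential of the \emph{minimal} Tate resolution is already the essential approximation $M'$ (not $N$), and ``$M'$ has no free summand'' follows from minimality together with the uniqueness of the minimal Tate resolution (a free summand of $M'$ could be cancelled, producing a minimal Tate resolution of $M$ with a degree-$0$ term of smaller rank, a contradiction). Your plan to ``exhibit an explicit splitting'' of a free summand off the cokernel by inspecting the bold arrows is therefore aimed at proving the wrong thing and, as written, proves nothing.

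Second, the count. The degree-$0$ term you correctly identify is $R\oplus\bigoplus_{i}\S_{i}(R^{c*})\otimes\bigwedge^{c+1+2i}R^{n}$, where $i$ runs over \emph{all} $i$ with $0\leq i\leq (n-c-1)/2$ (these are exactly the terms on the dashed line of Figure~\ref{Figure 2}, equivalently the summands of $F_{0}\oplus F_{n-c-1}^{*}$ from Proposition~\ref{general MCM approx}); in particular it contains the $i=0$ summand $\bigwedge^{c+1}R^{n}$ of rank ${n\choose c+1}$. Summing ranks therefore yields $1+\sum_{0\leq i\leq (n-c-1)/2}{n\choose c+1+2i}{c-1+i\choose i}$, which is \emph{not} the displayed formula, whose sum starts at $i=1$; you assert the displayed formula without reconciling the two. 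You must either explain why the $i=0$ summand contributes no generators (it does contribute: by minimality no cancellation occurs) or point out that the displayed lower limit is an off-by-one and the sum should start at $i=0$. A sanity check with $c=1$, $n=2$, say $g=xf_{1}+yf_{2}$ with $f_{1}=x$, $f_{2}=y$: the essential MCM approximation of the residue field over the hypersurface is the cokernel of a $2\times 2$ matrix factorization and needs two generators, matching $1+{2\choose 2}$, whereas the displayed sum is empty. Precisely the step you gloss over is where the corollary needs care.
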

\vfill\eject

 \begin{center}
\begin{tikzpicture}[thick,scale=0.8, every node/.style={transform shape}]
\node(0){$R$};
\node(1)[node distance = 1.15cm, right of=0]{$\bigwedge^{1}$};
\node(2)[node distance = 1.2cm, right of=1]{$\bigwedge^{2}$};
\node(3)[node distance = 1cm, right of=2]{};
\node(4)[node distance = 1cm, right of=3]{$\cdots$};
\node(5)[node distance = 1cm, right of=4]{};
\node(6)[node distance = 1.2cm, right of=5]{$\bigwedge^{k}$};
\node(7)[node distance = 1.2cm, right of=6]{};
\node(8)[node distance = .5cm, right of=7]{$\cdots$};
\node(9)[node distance = .5cm, right of=8]{};
\node(95)[node distance = 1cm, right of=9]{$\cdots$};
\node(10)[node distance = 1.7cm, right of=95]{$\bigwedge^{\cn-c}$};
\node(11)[node distance = 2.2cm, right of=10]{$\bigwedge^{\cn-c+1}$};
\node(12)[node distance = 1.4cm, right of=11]{};
 
\node(a1)[node distance = 1cm, below of=1]{$\D_{1}$};
\node(a2)[node distance = 1cm, below of=2]{$\D_{1}\bigwedge^{1}$};
\node(a3)[node distance = 1cm, below of=3]{};
\node(a4)[node distance = 1cm, below of=4]{$\cdots$};
\node(a5)[node distance = 1cm, below of=5]{};
\node(a6)[node distance = 1cm, below of=6]{$\D_{1}\bigwedge^{k-1}$};
\node(a7)[node distance = 1cm, below of=7]{};
\node(a8)[node distance = 1cm, below of=8]{$\cdots$};
\node(a9)[node distance = 1cm, below of=9]{};
\node(a95)[node distance = 1cm, below of=95]{$\cdots$};
\node(a10)[node distance = 1cm, below of=10]{$\D_{1}\bigwedge^{\cn-c-1}$};
\node(a11)[node distance = 1cm, below of=11]{$\D_{1}\bigwedge^{\cn-c}$};
\node(a12)[node distance = 1cm, below of=12]{};

\node(b2)[node distance = 1cm, below of=a2]{$\D_{2}$};
\node(b3)[node distance = 1cm, below of=a3]{};
\node(b4)[node distance = 1cm, below of=a4]{$\cdots$};
\node(b5)[node distance = 1cm, below of=a5]{};
\node(b6)[node distance = 1cm, below of=a6]{$\D_{2}\bigwedge^{k-2}$};
\node(b7)[node distance = 1cm, below of=a7]{};
\node(b8)[node distance = 1cm, below of=a8]{$\cdots$};
\node(b9)[node distance = 1cm, below of=a9]{};
\node(b95)[node distance = 1cm, below of=a95]{$\cdots$};
\node(b10)[node distance = 1cm, below of=a10]{$\D_{2}\bigwedge^{\cn-c-2}$};
\node(b11)[node distance = 1cm, below of=a11]{$\D_{2}\bigwedge^{\cn-c-1}$};
\node(b12)[node distance = 1cm, below of=a12]{};

\node(c3)[node distance = 1.5cm, below of=b3]{};
\node(c4)[node distance = 1.5cm, below of=b4]{};
\node(c5)[node distance = 1.5cm, below of=b5]{};
\node(c6)[node distance = 1.5cm, below of=b6]{$\vdots$};
\node(c7)[node distance = 1.5cm, below of=b7]{};
\node(c8)[node distance = 1.5cm, below of=b8]{$\cdots$};
\node(c9)[node distance = 1.5cm, below of=b9]{};
\node(c95)[node distance = 1cm, below of=b95]{};
\node(c10)[node distance = 1.5cm, below of=b10]{$\vdots$};
\node(c11)[node distance = 1.5cm, below of=b11]{$\cdots$};

\node(d6)[node distance = 3.2cm, below of=b6]{$\D_{k}$};
\node(d7)[node distance = 3.2cm, below of=b7]{};
\node(d8)[node distance = 3.2cm, below of=b8]{$\cdots$};
\node(d9)[node distance = 3.2cm, below of=b9]{};
\node(d95)[node distance = 3.2cm, below of=b95]{$\cdots$};
\node(d10)[node distance = 3.2cm, below of=b10]{$\D_{k}\bigwedge^{\cn-c-k}$};
\node(d11)[node distance = 3.2cm, below of=b11]{$\D_{2}\bigwedge^{\cn-c-k+1}$};
\node(d12)[node distance = 3.2cm, below of=b12]{};

\node(e7)[node distance = .9cm, below of=d7]{};
\node(e8)[node distance = .9cm, below of=d8]{$\cdots$};
\node(e9)[node distance = .9cm, below of=d9]{};
\node(e95)[node distance = .9cm, below of=d95]{$\cdots$};
\node(e10)[node distance = .9cm, below of=d10]{$\vdots$};
\node(e11)[node distance = .9cm, below of=d11]{$\cdots$};

\draw[<-](0)to node{} (1);
\draw[<-](1)to node{} (2);
\draw[<-](2)to node{} (3);
\draw[<-](5)to node{} (6);
\draw[<-](6)to node{} (7);
\draw[<-](95)to node{} (10);
\draw[<-](10)to node{} (11);
\draw[<-](11)to node{} (12);

\draw[<-](a1)to node{} (a2);
\draw[<-](a2)to node{} (a3);
\draw[<-](a5)to node{} (a6);
\draw[<-](a6)to node{} (a7);
\draw[<-](a95)to node{} (a10);
\draw[<-](a10)to node{} (a11);
\draw[<-](a11)to node{} (a12);

\draw[<-](b2)to node{} (b3);
\draw[<-](b5)to node{} (b6);
\draw[<-](b6)to node{} (b7);
\draw[<-](b95)to node{} (b10);
\draw[<-](b10)to node{} (b11);
\draw[<-](b11)to node{} (b12);

\draw[<-](d6)to node{} (d7);
\draw[<-](d95)to node{} (d10);
\draw[<-](d10)to node{} (d11);
\draw[<-](d11)to node{} (d12);
\node(a0')[node distance = 1.15cm, above of=0]{};
\node(a1')[node distance = 1.15cm, right of=a0']{$\bigwedge^{c+1}$};
\node(a2')[node distance = 1.2cm, right of=a1']{$\bigwedge^{c+2}$};
\node(a3')[node distance = 1cm, right of=a2']{};
\node(a4')[node distance = 1cm, right of=a3']{$\cdots$};
\node(a5')[node distance = 1cm, right of=a4']{};
\node(a6')[node distance = 1.2cm, right of=a5']{$\bigwedge^{c+k}$};
\node(a7')[node distance = 1.2cm, right of=a6']{};
\node(a8')[node distance = .5cm, right of=a7']{$\cdots$};
\node(a9')[node distance = .5cm, right of=a8']{};
\node(a95')[node distance = 1cm, right of=a9']{$\bigwedge^{\cn-1}$};
\node(a10')[node distance = 1.7cm, right of=a95']{$\bigwedge^{\cn}$};
\node(a11')[node distance = 2.2cm, right of=a10']{0};
\node(a12')[node distance = 1.4cm, right of=a11']{};

\node(b'2)[node distance = 1cm, above of=a2']{$\S_{1}\bigwedge^{c+3}$};
\node(b'3)[node distance = 1cm, above of=a3']{};
\node(b'4)[node distance = 1cm, above of=a4']{$\cdots$};
\node(b'5)[node distance = 1cm, above of=a5']{$$};
\node(b'6)[node distance = 1cm, above of=a6']{$\S_{1}\bigwedge^{c+k+1}$};
\node(b'7)[node distance = 1cm, above of=a7']{};
\node(b'8)[node distance = 1cm, above of=a8']{$\cdots$};
\node(b'9)[node distance = 1cm, above of=a9']{};
\node(b'95)[node distance = 1cm, above of=a95']{$\S_{1}\bigwedge^{\cn}$};
\node(b'10)[node distance = 1cm, above of=a10']{$0$};

\node(c'3)[node distance = 1cm, above of=b'3]{};
\node(c'4)[node distance = 1cm, above of=b'4]{};
\node(c'6)[node distance = 1.3cm, above of=b'6]{$\vdots$};
\node(c'7)[node distance = 1cm, above of=b'7]{};
\node(c'8)[node distance = 1cm, above of=b'8]{};

\node(d'6)[node distance = 3.2cm, above of=b'6]{$\S_{k}\bigwedge^{\cn}$};
\node(d'7)[node distance = 3.2cm, above of=b'8]{0};

\draw[<-](a1')to node{} (a2');
\draw[<-](a2')to node{} (a3');
\draw[<-](a5')to node{} (a6');
\draw[<-](a6')to node{} (a7');
\draw[<-](a95')to node{} (a10');
\draw[<-](a10')to node{} (a11');

\draw[<-](b'2)to node{} (b'3);
\draw[<-](b'5)to node{} (b'6);
\draw[<-](b'6)to node{} (b'7);
\draw[<-](b'95)to node{} (b'10);

\draw[<-](d'6)to node{} (d'7);

\draw [style=dashed] (a1) to node{} (0);
\draw[->] (a1) to node{} (1);
\draw[->] (a2) to node{} (2);
\draw[->] (a6) to node{} (6);
\draw[->] (a10) to node{} (10);

\draw [style=dashed] (b2) to node{} (a1);
\draw[->] (b2) to node{} (a2);
\draw[->] (b6) to node{} (a6);
\draw[->] (b10) to node{} (a10);

\draw [style=dashed] (d6) to node{} (b2);

\draw [style=dashed] (e7) to node{} (d6); dt

\draw [style=dashed] (a1') to node{} (0);
\draw[<-] (a1') [style=ultra thick] to node{} (1);
\draw[<-] (a2') [style=ultra thick]to node{} (2);
\draw[<-] (a6') [style=ultra thick]to node{} (6);
\draw[<-] (a10')[style=ultra thick]to node{} (10);

\draw [style=dashed] (b'2) to node{} (a1');
\draw[<-] (b'2) to node{} (a2');
\draw[<-] (b'6) to node{} (a6');
\draw[<-] (b'95) to node{} (a95');

\draw [style=dashed] (d'6) to node{} (b'2);
\end{tikzpicture}
\label{Figure3}
{\bf Figure \ref{Figure3}}
\end{center}

\let\thefootnote\relax\footnote{
\noindent AMS Subject Classification:
Primary: 14H99,
Secondary: 13D02, 14H51 \smallbreak
The first author is grateful to the
National Science Foundation for partial support. This work is a contribution to Project I.6 of the second author within the SFB-TRR 195 "Symbolic Tools in Mathematics and their Application" of the German Research Foundation (DFG).}
\bibliographystyle{ABC99}

\bigskip

\end{document}